\renewcommand*{\backref}[1]{}
\renewcommand*{\backrefalt}[4]{\tiny
  \ifcase #1 (\textbf{NOT CITED.})%
  \or    (Cited on page~#2.)%
  \else   (Cited on pages~#2.)%
  \fi}
\def\MRbibitem{\@ifnextchar[\my@lbibitem\my@bibitem}
\def\mybiblabel#1#2{\@biblabel{{\hyperref{http://www.ams.org/mathscinet-getitem?mr=#1}{}{}{#2}}}}
\def\myhyperanchor#1{\Hy@raisedlink{\hyper@anchorstart{cite.#1}\hyper@anchorend}}
\def\my@lbibitem[#1]#2#3#4\par{%
  \item[\mybiblabel{#2}{#1}\myhyperanchor{#3}\hfill]#4%
  \@ifundefined{ifbackrefparscan}{}{\BR@backref{#3}}%
  \if@filesw{\let\protect\noexpand\immediate
    \write\@auxout{\string\bibcite{#3}{#1}}}\fi\ignorespaces%
}
\def\my@bibitem#1#2#3\par{%
  \refstepcounter\@listctr
  \item[\mybiblabel{#1}{\the\value\@listctr}\myhyperanchor{#2}\hfill]#3%
  \@ifundefined{ifbackrefparscan}{}{\BR@backref{#2}}%
  \if@filesw\immediate\write\@auxout
    {\string\bibcite{#2}{\the\value\@listctr}}\fi\ignorespaces%
}
\DeclareFontFamily{U} {MnSymbolA}{}
\DeclareFontShape{U}{MnSymbolA}{m}{n}{
   <-6> MnSymbolA5
   <6-7> MnSymbolA6
   <7-8> MnSymbolA7
   <8-9> MnSymbolA8
   <9-10> MnSymbolA9
   <10-12> MnSymbolA10
   <12-> MnSymbolA12}{}
\DeclareFontShape{U}{MnSymbolA}{b}{n}{
   <-6> MnSymbolA-Bold5
   <6-7> MnSymbolA-Bold6
   <7-8> MnSymbolA-Bold7
   <8-9> MnSymbolA-Bold8
   <9-10> MnSymbolA-Bold9
   <10-12> MnSymbolA-Bold10
   <12-> MnSymbolA-Bold12}{}
\DeclareSymbolFont{MnSyA} {U} {MnSymbolA}{m}{n}
 \DeclareFontFamily{U} {MnSymbolC}{}
\DeclareFontShape{U}{MnSymbolC}{m}{n}{
  <-6> MnSymbolC5
  <6-7> MnSymbolC6
  <7-8> MnSymbolC7
  <8-9> MnSymbolC8
  <9-10> MnSymbolC9
  <10-12> MnSymbolC10
  <12-> MnSymbolC12}{}
\DeclareFontShape{U}{MnSymbolC}{b}{n}{
  <-6> MnSymbolC-Bold5
  <6-7> MnSymbolC-Bold6
  <7-8> MnSymbolC-Bold7
  <8-9> MnSymbolC-Bold8
  <9-10> MnSymbolC-Bold9
  <10-12> MnSymbolC-Bold10
  <12-> MnSymbolC-Bold12}{}
\DeclareSymbolFont{MnSyC} {U} {MnSymbolC}{m}{n}
\DeclareMathSymbol{\top}{\mathord}{MnSyA}{219} 
\DeclareMathSymbol{\plus}{\mathord}{MnSyC}{20} 
\declaretheorem[numberwithin=section]{theorem}
\declaretheorem[sibling=theorem]{lemma}
\declaretheorem[sibling=theorem]{corollary}
\declaretheorem[sibling=theorem]{proposition}
\declaretheorem[sibling=theorem,style=definition]{definition}
\declaretheorem[sibling=theorem,style=remark]{example}
\declaretheorem[sibling=theorem,style=remark]{remark}
\numberwithin{equation}{section}     
\setlist[enumerate,1]{label={\upshape(\alph*)},ref=\alph*}
\setlist[enumerate,2]{label={\upshape(\arabic*)},ref=\arabic*}
\newcommand{\R}{\mathbb{R}}
\newcommand{\Z}{\mathbb{Z}}
\newcommand{\N}{\mathbb{N}}
\renewcommand{\H}{\mathbb{H}}
\newcommand{\cK}{\mathcal{K}}
\newcommand{\cP}{\mathcal{P}}
\newcommand{\cU}{\mathcal{U}}
\newcommand{\cV}{\mathcal{V}}
\newcommand{\Rnon}{\mathbb{R}_{\plus}}    
\newcommand{\Rpos}{\mathbb{R}_{\plus\plus}} 
\newcommand{\Mat}[2][]{\ifthenelse{\equal{#1}{}}{\R^{{#2}\times{#2}}}{\R^{{#1}\times{#2}}}}
\newcommand{\Man}[2][]{\ifthenelse{\equal{#1}{}}{\Rnon^{{#2}\times{#2}}}{\Rnon^{{#1}\times{#2}}}}
\newcommand{\Map}[2][]{\ifthenelse{\equal{#1}{}}{\Rpos^{{#2}\times{#2}}}{\Rpos^{{#1}\times{#2}}}}
\renewcommand{\epsilon}{\varepsilon}
\renewcommand{\phi}{\varphi}
\renewcommand{\setminus}{\smallsetminus}
\begin{document}

\title{Pressure, Poincar\'e series and box dimension of the boundary}
\date{\today}

\subjclass[2010]{Primary 37A05}


\author[G.~Iommi]{Godofredo Iommi}
\address{Facultad de Matem\'aticas,
Pontificia Universidad Cat\'olica de Chile (PUC), Avenida Vicu\~na Mackenna 4860, Santiago, Chile}
\email{giommi@mat.puc.cl}
\urladdr{\url{http://http://www.mat.uc.cl/~giommi/}}

\author[A.~Velozo]{Anibal Velozo}  \address{Department of Mathematics, Yale University, New Haven, CT 06511, USA.}
\email{anibal.velozo@yale.edu }
\urladdr{\url{https://gauss.math.yale.edu/~av578/}}

\maketitle

\begin{abstract}
In this note we prove two related results. First, we show that for certain Markov interval maps with infinitely many branches the upper box dimension of the boundary can be read from the pressure of the geometric potential. Secondly, we prove that the box dimension  of the set of iterates of a point in $\partial \H^n$ with respect to a parabolic subgroup of isometries equals the critical exponent of the Poincar\'e series of the associated group. This establishes a relationship between the entropy at infinity and dimension theory.
\end{abstract}
\vspace{1cm}

\section{Introduction}

The application of thermodynamic formalism to the dimension theory of dynamical systems dates to the work of Bowen \cite{bo}, who related the Hausdorff dimension of a dynamically defined set to the root of a certain pressure function. More precisely, he proved that the Hausdorff dimension of the limit set of a quasi-Fuchsian group can be recovered from the pressure of a suitably chosen potential (see \cite[Lemma 10]{bo}).  Bowen's beautiful result highlights the relation between thermodynamic formalism and the dimension theory of limit sets. In the present article we prove, for two classes of dynamical systems defined on non-compact spaces, a new relation between these two theories.


We first consider a class of Markov interval maps with countably many branches called Expanding-Markov-Renyi interval maps (EMR). These are maps of the form $T:\bigcup_{n=1}^{\infty}I_n \to [0,1]$, where $I_n=[a_n, b_n]$, and $(I_n)_{n=1}^\infty$ is a collection of closed intervals contained in $[0,1]$ with disjoint interiors. Mauldin and Urba\'nski \cite{mu} proved that the Hausdorff dimension of the non-compact repeller $\Lambda$ of an EMR map $T$ is essentially the root of an equation involving the pressure of the geometric potential (see Proposition \ref{prop:bow}). They proved that
\begin{equation*}
\dim_H(\Lambda)= \inf \left\{t \in \R	: P(t) \leq 0	\right\},		
\end{equation*}
where $\dim_H$ denotes the Hausdorff dimension and $P(t)$ the pressure function (for precise definitions see Section \ref{s:2}). This generalizes Bowen's result to non-compact settings (see \cite[Chapter 5]{fa2} for the result in the setting of expanding Markov maps with finitely many branches). 

In this context, the pressure function $P(t)$ has a critical value $s_{\infty} \in (0, \infty)$ such that if $t<s_{\infty}$ then $P(t)= \infty$ and if $t>s_{\infty}$ then the pressure is finite. The relation between the critical value $s_{\infty}$ and the multifractal analysis theory has been studied in \cite{fjlr, ij}. For example, in \cite[Section 7]{fjlr} it is proved that the value $s_{\infty}$ is a lower bound for the Hausdorff dimension of sets of numbers having a prescribed frequency of digits in their continued fraction expansion. It was shown in \cite[Theorem 5.1]{ij} that the behaviour of the pressure at $s_{\infty}$ determines regularity properties of the multifractal spectrum of Lyapunov exponents. Moreover, in \cite[Theorem 7.1]{ij} it is proved that the Hausdorff dimension of the set of points having infinite Lyapunov exponent is precisely $s_{\infty}$. 

 In this article we prove a new relation between $s_{\infty}$ and the dimension theory of $T$. A novelty being that instead of Hausdorff, we consider box dimension. We denote by  $\overline{\dim}_B$ and $\dim_B$ the upper box dimension and the box dimension, respectively. One of the main results in this paper is the following (see Theorem \ref{main}). 

\begin{theorem} 
Let $T$ be an EMR map. Then
\begin{equation*}
s_{\infty}\le\overline{\dim}_B \left(\bigcup_{n=1}^{\infty} \left\{a_n, b_n	\right\} \right).
\end{equation*}
If the box dimension of $\bigcup_{n=1}^{\infty} \left\{a_n, b_n	\right\}$ exists, then   $$s_\infty=\dim_B\bigcup_{n=1}^{\infty} \left\{a_n, b_n	\right\}.$$
\end{theorem}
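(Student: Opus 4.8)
The plan is to identify both $s_\infty$ and the box dimension of $F\coloneqq\bigcup_{n}\{a_n,b_n\}$ with the \emph{exponent of convergence} of the length sequence $(|I_n|)_{n\ge1}$, where $|I_n|\coloneqq b_n-a_n$, and then to control the discrepancy coming from those complementary intervals of $F$ that are not the interiors of the $I_n$.

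First I would recast $s_\infty$ in metric terms. Writing $Z_1(t)=\sum_{n}\sup_{I_n}|T'|^{-t}$ for the first--level partition function of the geometric potential, the definition of the pressure in Section \ref{s:2} gives $P(t)=\infty$ exactly when $Z_1(t)=\infty$, so $s_\infty=\inf\{t:Z_1(t)<\infty\}$. The R\'enyi (bounded distortion) hypothesis together with the mean value theorem yields $\sup_{I_n}|T'|^{-t}\asymp|I_n|^{t}$ uniformly in $n$ (because $T$ maps $I_n$ onto an interval whose length is bounded below), whence $Z_1(t)\asymp\sum_n|I_n|^{t}$ and
\[
s_\infty=\theta\coloneqq\inf\Big\{t\ge0:\ \sum_{n}|I_n|^{t}<\infty\Big\}.
\]
It is convenient to record the elementary reformulation $\theta=\limsup_{\epsilon\to0}\frac{\log\#\{n:|I_n|\ge\epsilon\}}{-\log\epsilon}$, which is just the statement that the exponent of convergence of a length sequence is read off from its counting function.

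Next I would prove the easy half of the sandwich $\underline{\dim}_B F\le s_\infty\le\overline{\dim}_B F$, namely $s_\infty\le\overline{\dim}_B F$. Let $N(\epsilon)$ be the least number of sets of diameter $\epsilon$ covering $F$. Each open interval $(a_n,b_n)$ is a complementary interval of $F$, and two points of $F$ lying on opposite sides of a complementary interval of length $>\epsilon$ cannot be covered by a single set of diameter $\epsilon$. Hence $N(\epsilon)\ge\#\{n:|I_n|>\epsilon\}$, and taking $\limsup_{\epsilon\to0}$ and invoking the reformulation above gives $\overline{\dim}_B F\ge\theta=s_\infty$. This already yields the first assertion, and uses no regularity of the endpoints.

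It then remains to prove $\underline{\dim}_B F\le s_\infty$, which combined with the previous step squeezes $\dim_B F$ to $s_\infty$ whenever the box dimension exists. Fix $t\in(s_\infty,1)$, so $S\coloneqq\sum_n|I_n|^{t}<\infty$. At scale $\epsilon$ I would cover $F$ in two parts. The endpoints of the long intervals cost at most $2\#\{n:|I_n|\ge\epsilon\}\le2\epsilon^{-t}S$ sets. The endpoints carried by the short intervals lie in $\bigcup_{|I_n|<\epsilon}\overline{I_n}$, a set of total length $\sum_{|I_n|<\epsilon}|I_n|\le\epsilon^{1-t}\sum_{n}|I_n|^{t}\le\epsilon^{1-t}S$, and covering it costs $\lesssim\epsilon^{-1}\cdot\epsilon^{1-t}S=\epsilon^{-t}S$ sets \emph{plus one set per connected component}. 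Thus $N(\epsilon)\lesssim\epsilon^{-t}+(\text{number of components of }\bigcup_{|I_n|<\epsilon}\overline{I_n})$, and the only uncontrolled term is the number of components, i.e. the contribution of the complementary gaps of $F$ that are \emph{not} interiors of the $I_n$ (the outer gaps).

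The main obstacle is therefore to bound the outer gaps. The point to extract from the EMR structure is that the intervals $I_n$ cluster, near the points dictated by the Markov/expansion data, at a rate governed by $(|I_n|)$, so that the exponent of convergence of the outer gap lengths does not exceed $\theta$ along a suitable sequence of scales $\epsilon_k\to0$; quantifying this clustering via the distortion bound is the heart of the argument, and it is exactly here that the hypothesis that $\dim_B F$ exists is used, since it guarantees that $\log N(\epsilon)/(-\log\epsilon)$ converges and hence that the efficient scales produced above already compute the box dimension. With the outer gaps absorbed one obtains $N(\epsilon_k)\lesssim\epsilon_k^{-t}$, so $\underline{\dim}_B F\le t$; letting $t\downarrow s_\infty$ finishes the proof.
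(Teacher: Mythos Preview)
Your identification of $s_\infty$ with the exponent of convergence $\theta=\inf\{t:\sum_n|I_n|^t<\infty\}$ is correct and is exactly what the paper does (mean value theorem plus the R\'enyi/Jacobian estimate). Your counting argument for $s_\infty\le\overline{\dim}_B F$ via $N(\epsilon)\ge\#\{n:|I_n|>\epsilon\}$ is also fine.

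The final paragraph, however, is not a proof. The distortion bound controls $|T'|$ \emph{within} each $I_n$; it says nothing about the lengths or number of the gaps \emph{between} the $I_n$, so invoking it to bound the outer gaps is unjustified. And the hypothesis ``$\dim_B F$ exists'' does not furnish efficient scales $\epsilon_k$: knowing that $\log N(\epsilon)/(-\log\epsilon)$ converges tells you the limit exists, not that it is small along some subsequence you have produced. As written, you have not established $\underline{\dim}_B F\le s_\infty$, and the place where you say ``this is the heart of the argument'' is exactly where the argument is missing.

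The paper avoids the covering/component count entirely by a different sandwich. With the $|I_n|$ arranged in decreasing order, set
\[
\underline L=\liminf_{n\to\infty}\frac{\log n}{-\log|I_n|},\qquad \overline L=\limsup_{n\to\infty}\frac{\log n}{-\log|I_n|}.
\]
One proves $\underline L\le s_\infty\le\overline L$ by comparing $\sum_n|I_n|^{r(\overline L+\epsilon)}$ and $\sum_n|I_n|^{r(\underline L-\epsilon)}$ with $\sum_n n^{-r}$. Then one quotes Falconer's cut-out results (Propositions~3.6--3.7 and Corollary~3.8 of \emph{Techniques in Fractal Geometry}), which give $\underline L\le\underline{\dim}_B F\le\overline{\dim}_B F=\overline L$ and that $\dim_B F$ exists if and only if $\underline L=\overline L$. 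The hypothesis enters only at this last equivalence: existence of the box dimension forces $\underline L=\overline L$, which pins $s_\infty$ to that common value. No component counting is needed; note also that the quoted Falconer proposition is stated for $[0,1]=\bigcup_n I_n$, so in the setting the paper has in mind there are no outer gaps at all.
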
 
Depending on the properties of the pressure function at the critical value we are able to derive certain stability results. For example, we prove existence of measures of maximal dimension for perturbations of the original map (see Corollary \ref{cor:pert}).



Bowen's work on the dimension theory of limit sets has been further developed by Sullivan. In a landmark paper \cite{s} Sullivan proved that the Hausdorff dimension of the limit set of a geometrically finite Kleinian group $\Gamma\leqslant\text{Iso}(\H^n)$  coincides with the critical exponent of $\Gamma$, which is denoted by $\delta_\Gamma$ (see \cite[Theorem 25]{s}). For precise definitions we refer the reader to Section \ref{sec:geo}. A major generalization of Sullivan's result was obtained by Bishop and Jones in \cite{bj}, where they proved that the critical exponent $\delta_\Gamma$ is equal to the Hausdorff dimension of the radial limit set of $\Gamma$ for an arbitrary Kleinian group (see \cite[Theorem 2.1]{bj}). The critical exponent of $\Gamma$ has yet another dynamical interpretation: it is equal to the topological entropy of the geodesic flow on $T^1(\H^n/\Gamma)$ (see \cite[Theorem 1]{op}). 
In other words we have that $\delta_\Gamma=\sup_{\mu}h_\mu(g_1)$, where the supremum runs over the space of invariant probability measures of the geodesic flow and $g_1$ is the time-one map of the geodesic flow. 

In recent works a new dynamical invariant has been studied, the entropy at infinity of the geodesic flow (see \cite{irv}, \cite{rv}, \cite{v2}).  For completeness we recall its definition here. We say that a sequence of measures $(\mu_n)_n$ converges in the vague topology to $\mu$ if $\lim_{n\to\infty}\int fd\mu_n=\int fd\mu$, for every continuous function of compact support $f$. The entropy at infinity of the geodesic flow on $\H^n/\Gamma$ is defined by 
$$h_\infty(\Gamma):=\sup_{(\mu_n)_n\to 0}\limsup_{n\to\infty}h_{\mu_n}(g_1),$$
where the supremum runs over sequences of invariant probability measures $(\mu_n)_n$  that converge in the vague topology to the zero measure. The entropy at infinity of the geodesic flow is strongly related to the upper-semicontinuity of the entropy map and it is an important tool in the study of the thermodynamic formalism of the geodesic flow (see \cite{irv}, \cite{rv}, \cite{v1}, \cite{v2}). 

From now on assume that $\Gamma\leqslant \text{Iso}(\H^n)$ is geometrically finite. In this case the topological entropy of the geodesic flow on $\H^n/\Gamma$ is equal to the Hausdorff dimension of the limit set of $\Gamma$. Because of this relation it is natural to ask if there is also a relation between the entropy at infinity and dimension theory of the boundary. One of the main goals of this paper is to prove that for geometrically finite manifolds this is indeed the case. It is important to mention that under the geometrically finite assumption we have that  $h_\infty(\Gamma)=\sup_\mathcal{P} \delta_\mathcal{P},$ where the supremum runs over the parabolic subgroups of $\Gamma$ (see \cite[Theorem 1.3]{rv}). In other words, the entropy at infinity is determined by the critical exponent of the parabolic subgroups of $\Gamma$. In Section \ref{sec:geo} we prove the following result.
\begin{theorem} Let $\mathcal{P}\leqslant\text{\normalfont Iso}(\H^n)$ be a parabolic subgroup and $\xi\in \partial \H^n$ a point which is not fixed by $\mathcal{P}$. Then 
$$\delta_\mathcal{P}=\dim_B\left(\bigcup_{p\in \mathcal{P}}p\xi\right),$$
where $\dim_B$ is the box dimension computed using the spherical metric on $\partial\H^n$. 
\end{theorem}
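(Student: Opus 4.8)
The plan is to place the fixed point of $\mathcal{P}$ at $\infty$ in the upper half-space model $\H^n=\R^{n-1}\times(0,\infty)$, so that $\partial\H^n=\R^{n-1}\cup\{\infty\}$ carries the round (spherical) metric $\sigma$ of $S^{n-1}$ and every $p\in\mathcal{P}$ acts on the boundary as a Euclidean isometry $p\colon x\mapsto A_px+b_p$ with $A_p\in O(n-1)$ and $b_p\in\R^{n-1}$; the hypothesis means $\xi\in\R^{n-1}$. Since $\mathcal{P}$ is discrete, it acts properly discontinuously on $\R^{n-1}$, and by Bieberbach's theorem a finite-index subgroup is a rank-$k$ lattice of translations inside a $k$-dimensional subspace $V\subseteq\R^{n-1}$. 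Hence the counting function $N(M):=\#\{p\in\mathcal{P}:|b_p|\le M\}$ has regular polynomial growth $N(M)\asymp M^{k}$, and in particular $\lim_{M\to\infty}\tfrac{\log N(M)}{\log M}=k$.

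I would first compute $\delta_\mathcal{P}$. Taking $o=(0,1)$ one has $po=(b_p,1)$, so the distance formula gives $\cosh d(o,po)=1+\tfrac12|b_p|^2$, whence $d(o,po)=2\log|b_p|+O(1)$. Therefore
\[
\sum_{p\in\mathcal{P}}e^{-s\,d(o,po)}\asymp\sum_{p\in\mathcal{P}}|b_p|^{-2s}\asymp\sum_{j\ge 0}2^{j(k-2s)},
\]
which converges exactly when $s>k/2$, so $\delta_\mathcal{P}=k/2$. It then remains to prove $\dim_B O=k/2$ for the orbit $O=\{p\xi:p\in\mathcal{P}\}$. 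Because $|A_p\xi|=|\xi|$, each $p\xi=A_p\xi+b_p$ satisfies $|p\xi|=|b_p|+O(1)$ and lies within bounded distance of the $k$-dimensional affine subspace $\xi+V$; proper discontinuity gives a uniform separation $|p\xi-q\xi|\ge\rho>0$ whenever $p\xi\neq q\xi$, and $O$ accumulates only at $\infty$.

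To estimate spherical covering numbers I would use the chordal comparison $\sigma(x,y)\asymp |x-y|\big(\sqrt{1+|x|^2}\sqrt{1+|y|^2}\big)^{-1}$ together with the inversion $\iota(x)=x/|x|^2$, a conformal automorphism of $S^{n-1}$ exchanging $0$ and $\infty$ with bounded distortion near these regular points; thus the spherical covering numbers of $O$ agree up to constants with the Euclidean covering numbers of $\iota(O)$ near $0$. Since $|\iota(x)-\iota(y)|=|x-y|/(|x|\,|y|)$, an orbit point at scale $|b_p|\asymp t$ maps to scale $\asymp 1/t$, and a $\rho$-separated pair at scale $t$ maps to a pair that is $\asymp\rho/t^{2}$-separated. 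Consequently, in the dyadic annulus $\{r/2\le|z|<r\}$ about $0$ the set $\iota(O)$ consists of $\asymp r^{-k}$ points, lying within bounded distance of a $k$-plane and mutually $\asymp r^{2}$-separated. The heart of the argument is the resulting count: one distinguishes three regimes in $r$ relative to the covering radius $\epsilon$, namely $r\le\epsilon$ (all points covered by $O(1)$ balls about $0$), $\epsilon\le r^2$ (each of the $\asymp r^{-k}$ points needs its own ball), and the intermediate range $r^{2}\le\epsilon\le r$ (an $\epsilon$-ball meets $\asymp(\epsilon/r^{2})^{k}$ points, so $\asymp(r/\epsilon)^{k}$ balls suffice). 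Summing dyadically, both the intermediate scales $\epsilon\le r\le\sqrt\epsilon$ and the separated scales $r\ge\sqrt\epsilon$ are dominated by $r\asymp\sqrt\epsilon$ and contribute $\asymp\epsilon^{-k/2}$, the rest contributing $O(1)$. This yields $N_\sigma(\epsilon,O)\asymp\epsilon^{-k/2}$, so $\dim_B O$ exists and equals $k/2=\delta_\mathcal{P}$.

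The main obstacle is the last step: converting the heuristic packing count of the intermediate regime into matching upper and lower bounds, uniform in $\epsilon$, which requires genuinely exploiting that $O$ sits, up to bounded error, in a $k$-dimensional affine subspace with uniform separation and uniform density $\asymp M^{k}$. In particular one must verify that the rotational parts $A_p$ and the torsion permitted by Bieberbach perturb the orbit only by bounded amounts and do not alter the effective $k$-dimensional packing density; once this is controlled, everything else reduces to the two scale comparisons (distance formula and inversion) recorded above.
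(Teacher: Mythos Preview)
Your argument is correct in outline and the acknowledged ``obstacle'' is routine: once the orbit is written as a finite union $\bigcup_i(\xi_i+\Lambda)$ of translates of a rank-$k$ lattice $\Lambda\subset V$, the intermediate-regime count follows from the standard lattice-point estimate $\#(\Lambda\cap B(x,R))\asymp R^k$ for $R\gtrsim 1$, pulled back through the local scaling $|\iota(x)-\iota(y)|\asymp r^{2}|x-y|$ on the annulus $|x|\asymp 1/r$. The rotational parts $A_p$ only move points by $O(|\xi|)$ and do not affect any of the asymptotics.

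Your route, however, is quite different from the paper's. For the upper bound $\overline{\dim}_B(\mathcal{P}\xi)\le\delta_{\mathcal{P}}$ the paper does \emph{not} count covers at all: it embeds $\mathcal{P}\xi$ into the limit set of a geometrically finite Schottky combination $\Gamma_k=\mathcal{P}\ast\langle h^k\rangle$ (with $h$ a hyperbolic isometry fixing $\xi$), invokes the Stratmann--Urba\'nski theorem $\dim_B\Lambda(\Gamma_k)=\delta_{\Gamma_k}$, and then lets $k\to\infty$ using $\delta_{\Gamma_k}\to\delta_{\mathcal{P}}$. For the lower bound the paper works with the Bourdon visual metric and Busemann functions, proving $d_{\partial\H^n}\big((N{+}m).\eta,\,N.\eta\big)\ge e^{-d(o,N.o)-C}$ directly and then counting orbit points via $\#\{N:d(o,N.o)\le t\}\asymp e^{tk/2}$; finite additivity of box dimension handles the passage from $\mathcal{P}_0$ to $\mathcal{P}$, as in your sketch.

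The trade-off is clear. Your approach is more elementary and self-contained: it never leaves Euclidean geometry once the fixed point is at $\infty$, and it yields matching upper and lower covering estimates $N_\sigma(\epsilon,\mathcal{P}\xi)\asymp\epsilon^{-k/2}$ simultaneously. The paper's upper bound is slick but imports substantial machinery (Stratmann--Urba\'nski on box dimension of geometrically finite limit sets, and the convergence $\delta_{\Gamma_k}\to\delta_{\mathcal{P}}$ from the Schottky construction); on the other hand, its lower-bound argument via Busemann functions generalizes more readily to variable curvature, which is consistent with the paper's earlier $\mathrm{CAT}(-1)$ surface result.
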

In Section \ref{sec:geo} we also obtain very similar results to those in Sub-section \ref{www} in the context of $Cat(-1)$ surfaces. 

It is interesting to note the relation between the ergodic theory of countable Markov shifts (or related dynamical systems such as the interval maps considered in this article) and the ergodic theory of geodesic flows on non-compact complete hyperbolic manifolds. In some cases  Markov partitions can be constructed for the geodesic flow and then the relation is rather explicit (see \cite{dp, irv}). 
Unfortunately, in general it is not known if there exists a symbolic coding for the geodesic flow. Despite this, in recent  years thermodynamic formalism  for geodesic flows has been extensively studied and  results have been proved in analogy to those obtained for countable Markov shifts  (see for example  \cite{pps, ps, rv, v1,v2}). On the other hand, the ergodic theory of countable Markov shifts has been studied mirroring results obtained for geodesic flows. For example, continuity properties of the entropy map and its relations with escape of mass were obtained in \cite{rv,v2} for the geodesic flow and later in \cite{itv1, itv2} for countable Markov shifts. The results in this paper are further evidence of the strong relation between these two classes of dynamical systems.\\

%

\noindent
{\bf Acknowledgment.} We thank Neil Dobbs and Amitesh Datta for many useful comments on this paper. G.I. was partially supported by CONICYT PIA ACT172001 and by Proyecto Fondecyt 1190194.

\section{Pressure and dimension}\label{s:2}

\subsection{Dimension theory}     \label{dim}
In this sub-section we recall some basic definitions and results from dimension theory that will be used in what follows. For a complete account on dimension theory we refer the reader to \cite{fa} and \cite{fa2}. 

Let $(M,d)$ be a metric space. The diameter of a set $B\subset M$ is denoted by $|B|$. A countable collection of subsets $\{U_i \}_{i\in \N}$ of $M$,   is called a $\delta$-cover of $F \subset M$ if $F\subset\bigcup_{i\in\N} U_i$, and $|U_i|$ is at most $\delta$ for every $i\in\N$. Letting $s>0$, we define
\[
\mathcal{H}^s(J) := \lim_{\delta \to 0}\inf \left\{ \sum_{i=1}^{\infty} |U_i|^s : \{U_i \}_i \text{ is a } \delta\text{-cover of } J \right\}.
\]
The \emph{Hausdorff dimension} of the set $J$ is defined by
\[
{\dim_H}(J) := \inf \left\{ s>0 : \mathcal{H}^s(J) =0 \right\}.
\]
The \emph{Hausdorff dimension} of a Borel measure $\mu$ is defined by
\[
{\dim_H}(\mu) := \inf \left\{ {\dim_H}(Z): \mu(Z)=1 \right\}.
\]
For a detailed discussion on the Hausdorff dimension see \cite[Chapter 2]{fa}.

An alternative definition of dimension that will be central to this work is  the \emph{box dimension}. Let $F \subset M$ and $N_{\delta}(F)$ be the smallest number of sets of diameter at most $\delta$ needed to cover $F$. The \emph{lower} and \emph{upper box dimensions} are defined by
\begin{equation*}
\underline{\dim_B}(F):= \liminf_{\delta \to 0} \frac{\log N_{\delta}(F)}{-\log \delta} \quad , \quad \overline{\dim_B}(F):= \limsup_{\delta \to 0} \frac{\log N_{\delta}(F)}{-\log \delta}
\end{equation*}
If the limits above are equal then we call this common value  the \emph{box dimension} of the set $F$,
\[ \dim_B(F):= \lim_{\delta \to 0} \frac{\log N_{\delta}(F)}{-\log \delta}. \]

\begin{remark}
It is useful to note that in the definition of box dimension it is possible to replace the number $N_{\delta}(F)$ by the largest number of disjoint balls of radius $\delta$ and  centers in $F$ (see \cite[p.41]{fa}).
\end{remark}
The following result will be of great importance in this paper (see \cite[Proposition 3.6 and 3.7]{fa2}).

\begin{proposition}
Let $I_n=[a_n , b_n ]$ be a sequence of intervals with disjoint interiors such that $[0,1]=\bigcup_{n=1}^\infty I_n$ and let $F= \bigcup_{n=1}^{\infty} \{ a_n, b_n \}$. Assume that $(b_n-a_n)_n$ is non-increasing.  Then
\begin{equation*}
\left(-\liminf_{n \to \infty} \frac{\log (b_n-a_n)}{\log n} \right)^{-1}	\leq \underline{\dim_B}(F) \leq \overline{\dim_B}(F) 
\leq \left(-\limsup_{n \to \infty} \frac{\log (b_n-a_n)}{\log n} 	\right)^{-1}.
\end{equation*}
\end{proposition}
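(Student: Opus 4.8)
The plan is to reduce everything to a two-sided estimate for the covering number $N_\delta(F)$ in terms of the non-increasing lengths $\ell_n:=b_n-a_n$, and then to translate that estimate into the two dimension bounds. First I record that the right-hand side of the proposition equals $1/\beta$ and the left-hand side equals $1/\alpha$, where $\beta:=\liminf_n\frac{\log(1/\ell_n)}{\log n}$ and $\alpha:=\limsup_n\frac{\log(1/\ell_n)}{\log n}$ (using $-\limsup x_n=\liminf(-x_n)$); so the claim is $1/\alpha\le\underline{\dim_B}(F)\le\overline{\dim_B}(F)\le1/\beta$. Since $[0,1]=\bigcup_nI_n$ forces $\sum_n\ell_n=1$ and the $\ell_n$ are non-increasing, we have $n\ell_n\le\sum_{m\le n}\ell_m\le1$, hence $\ell_n\le 1/n$ and $\alpha\ge\beta\ge1$. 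I set $k(\delta):=\#\{n:\ell_n\ge\delta\}$, which by monotonicity equals the largest index with $\ell_{k(\delta)}\ge\delta$, so the ``large'' intervals are $I_1,\dots,I_{k(\delta)}$; and I write $R(\delta):=\sum_{n>k(\delta)}\ell_n$ for the total length of the remaining ``small'' intervals.

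The central step is the estimate
\[
\tfrac12\,k(\delta)\ \le\ N_\delta(F)\ \le\ C\bigl(k(\delta)+\delta^{-1}R(\delta)\bigr)
\]
for a universal $C$ and all small $\delta$. For the upper bound I would cover $[0,1]$ by the mesh $\{[j\delta,(j+1)\delta]\}$ and retain the cells meeting $F$ (these cover $F$, each of diameter $\delta$): such a cell either contains one of the at most $2k(\delta)$ endpoints of a large interval, or meets $S:=\bigcup_{n>k(\delta)}I_n$; as $S$ has length $R(\delta)$ and at most $k(\delta)+1$ components (it fills the gaps between the large intervals), it is met by at most $\delta^{-1}R(\delta)+2(k(\delta)+1)$ cells, giving the bound. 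For the lower bound I would list the large intervals left to right as $[c_1,d_1],\dots,[c_k,d_k]$; disjointness gives $d_i\le c_{i+1}$, so $d_{i+1}-d_i\ge d_{i+1}-c_{i+1}\ge\delta$, whence $d_1,\dots,d_{k(\delta)}\in F$ are pairwise $\ge\delta$ apart. Any set of diameter $\le\delta$ contains at most two of them, so every $\delta$-cover has at least $k(\delta)/2$ members.

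To deduce the bounds I feed the asymptotics of $(\ell_n)$ into this estimate. For $\underline{\dim_B}(F)\ge1/\alpha$, fix $s<1/\alpha$ and choose $\epsilon>0$ with $\alpha+\epsilon<1/s$; then $\ell_n>n^{-(\alpha+\epsilon)}$ for all large $n$, so $k(\delta)\ge\tfrac12\delta^{-1/(\alpha+\epsilon)}\ge\tfrac12\delta^{-s}$ for small $\delta$, and $N_\delta(F)\ge k(\delta)/2$ gives $\underline{\dim_B}(F)\ge s$. For $\overline{\dim_B}(F)\le1/\beta$, fix $s>1/\beta$; if $s\ge1$ this is immediate from $N_\delta(F)\le2\delta^{-1}$, so assume $s<1$, which forces $\beta>1$, and pick $q:=\beta-\epsilon$ with $q>\max(1,1/s)$, so $\ell_n<n^{-q}$ for large $n$. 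Then $k(\delta)\le\delta^{-1/q}$, while splitting $R(\delta)\le\sum_{n\ge1}\min(\delta,n^{-q})$ at $n\approx\delta^{-1/q}$ yields $R(\delta)\le C'\delta^{(q-1)/q}$ and hence $\delta^{-1}R(\delta)\le C'\delta^{-1/q}\le C'\delta^{-s}$. The estimate then gives $N_\delta(F)\le C''\delta^{-s}$, so $\overline{\dim_B}(F)\le s$; letting $s$ tend to $1/\beta$ and $1/\alpha$ finishes the proof.

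I expect the main obstacle to be the upper half of the central estimate, and specifically the control of the tail term $\delta^{-1}R(\delta)$. The endpoints of the small intervals genuinely populate on the order of $\delta^{-1}R(\delta)$ mesh cells --- this term cannot be dropped, as the case of nearly equal lengths shows --- and bounding it by $\delta^{-s}$ requires the $\min(\delta,n^{-q})$ splitting rather than any naive comparison with $k(\delta)$, since one has no lower bound on $k(\delta)$ of the needed strength. The remaining passage from the index-asymptotics of $(\ell_n)$ to the scale-asymptotics of $k(\delta)$ and $R(\delta)$ is routine once $\ell_n\le1/n$ and the central estimate are available.
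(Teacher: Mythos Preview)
The paper does not give its own proof of this proposition; it is quoted from Falconer's \emph{Techniques in Fractal Geometry} (Propositions~3.6 and~3.7 there), so there is no in-paper argument to compare your attempt against.

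That said, your proof is correct. The two-sided covering estimate
\[
\tfrac12\,k(\delta)\ \le\ N_\delta(F)\ \le\ C\bigl(k(\delta)+\delta^{-1}R(\delta)\bigr)
\]
is established cleanly --- the lower half via the $\delta$-separation of the right endpoints of the long intervals, the upper half via a mesh count using that $S=\bigcup_{n>k(\delta)}I_n$ has total length $R(\delta)$ and at most $k(\delta)+1$ components --- and the passage to the dimension bounds is handled correctly, including the tail estimate $R(\delta)\le C'\delta^{(q-1)/q}$ obtained by splitting $\sum_n\min(\delta,n^{-q})$ at $n\sim\delta^{-1/q}$. This is the standard route and is in substance the argument Falconer gives; the only cosmetic difference is that Falconer phrases the result for a closed set with complementary open intervals rather than for the countable endpoint set, but box dimension is unchanged under closure so this is immaterial.
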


\subsection{Markov shifts and Markov maps} \label{mar}
In this Sub-section we explain the symbolic structure we assume for the one dimensional maps we consider. 

The full-shift on the countable alphabet $\mathbb{N}$ is the pair $(\Sigma, \sigma)$ where
$\Sigma :=\N^\N$
and $\sigma: \Sigma \to \Sigma$ is the \emph{shift} map  defined by $\sigma(\omega_1, \omega_2, \cdots)=(\omega_2, \omega_3,\cdots)$.
We equip $\Sigma$ with the topology generated by the cylinders sets
\begin{equation*}
 C_{i_1 \cdots i_n}:=  \{ \omega \in \Sigma :\omega_j= i_j, \forall j \in \{1, \dots, n\} \}.
\end{equation*} 

Denote by  $I=[0,1]$ the unit interval. Let  $\{ I_i \}_{i\in\N}$ be a countable collection of closed intervals where  $\textrm{int}(I_i)\cap \textrm{int}(I_j)=\emptyset$, for $i,j\in\N$ with $i\neq j$, and $[a_i, b_i]:=I_i \subset I$ for every $i \in \mathbb{N}$. Let $T:\bigcup_{n=1}^{\infty}I_n \to I$ 
be a map. The \emph{repeller} of such a map is defined by
\[\Lambda:=\left\{x \in \bigcup_{i=1}^{\infty} I_i: T^n(x) \textrm{ is well defined for every } n \in \mathbb{N} \right\}.\]
Let $\mathcal{O}:= \bigcup_{k=0}^{\infty} T^{-k}(\bigcup_{i=0}^{\infty} \{a_i, b_i\})$. 
We say that $T$ is \emph{Markov} and it can be coded by a full-shift on a countable alphabet if  there exists a homemorphism  $\pi: \Sigma \to \Lambda \setminus \mathcal{O}$ such that $T \circ \pi =\pi  \circ  \sigma$. Denote by $I_{i_1 \dots i_n} \subset I$ the projection of the symbolic cylinder $C_{i_1 \dots i_n}$ by $\pi$.

\subsection{The class of maps}
The  class of EMR (expanding-Markov-Renyi) interval maps  was considered  by Pollicott and Weiss in \cite{pw} in their study of multifractal analysis.

\begin{definition} \label{def:emr}
 Let $\{ I_i \}_{i\in\N}$ be a countable collection of closed intervals where  $\textrm{int}(I_i)\cap \textrm{int}(I_j)=\emptyset$ for $i,j\in\N$ with $i\neq j$ and $[a_i, b_i]:=I_i \subset I$ for every $i \in \mathbb{N}$.
A map $T:\bigcup_{n=1}^{\infty}I_n \to I$ is an EMR map, if the following properties are satisfied
\begin{enumerate}
\item The only accumulation point for the end points of the intervals $[a_i, b_i]$ is $x=0$. 
\item The map is $C^2$ on $\bigcup_{i=1}^{\infty} \textrm{int }I_i$.
\item There exists $\xi >1$ and $N\in\N$ such that for every $x \in \bigcup_{i=1}^{\infty} I_i$ and $n\geq N$
we have $|(T^n)'(x)|>\xi^n$.
\item The map $T$ is Markov and it can be coded by a full-shift on a countable alphabet.
\item The map satisfies the Renyi condition, that is, there exists a positive number $K>0$
such that
\[ \sup_{n \in \N} \sup_{x,y,z \in I_n} \frac{|T''(x)|}{|T'(y)| |T'(z)|} \leq K. \]
\end{enumerate}
\end{definition}

%

\begin{example}\label{ex:g}
The Gauss map $G:(0,1] \to (0,1]$ defined by
\[G(x)= \frac{1}{x} -\Big[ \frac{1}{x} \Big],\]
where $[ \cdot]$ is the integer part, satisfies our assumptions. The Gauss map with restricted digits (that is the Gauss map with  branches erased so that there are still infinitely many branches left) is also a EMR map. 
\end{example}

%

The following is a fundamental property of EMR maps, see \cite[Chapter 7 Section 4]{cfs} or \cite[p.149]{pw}.
\begin{lemma} \label{lem:je}
There exists a positive constant $C>0$ such that for every $x \in \Lambda \setminus \mathcal{O}$ with $x \in I_{i_1 \dots i_n}$  we have
\begin{equation*}
\frac{1}{C} \leq \sup_{n \geq 0} \sup_{y \in I_{i_1 \dots i_n}} \left|	 \frac{(T^n)'(x)}{(T^n)'(y)}		\right| \leq C.
\end{equation*}
\end{lemma}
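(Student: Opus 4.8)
The plan is to establish this bounded distortion estimate in the classical way: bound the logarithm of the ratio via the chain rule and control each summand using the Renyi condition (e) together with the uniform expansion (c). The genuine content is the upper bound; the lower bound is symmetric under exchanging $x$ and $y$, and in fact holds trivially once $C\ge 1$ since taking $y=x$ makes the ratio equal to $1$. So it suffices to produce a uniform upper bound for $\log\left|(T^n)'(x)/(T^n)'(y)\right|$ valid for all $n$ and all $x,y$ in a common cylinder $I_{i_1\cdots i_n}$.

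First I would apply the chain rule to write
\[
\log\left|\frac{(T^n)'(x)}{(T^n)'(y)}\right|=\sum_{k=0}^{n-1}\Big(\log|T'(T^k x)|-\log|T'(T^k y)|\Big).
\]
For each $k$ the points $T^k x$ and $T^k y$ lie in the same branch interval $I_{i_{k+1}}$, so by the mean value theorem there is a $\zeta_k$ between them for which the $k$-th summand equals $\frac{T''(\zeta_k)}{T'(\zeta_k)}\big(T^k x-T^k y\big)$. Applying the mean value theorem once more to $T$ on the segment joining $T^k x$ and $T^k y$ produces $\theta_k\in I_{i_{k+1}}$ with $|T^k x-T^k y|=\big|T^{k+1}x-T^{k+1}y\big|/|T'(\theta_k)|$. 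Substituting this and invoking the Renyi condition with the three points $\zeta_k,\zeta_k,\theta_k\in I_{i_{k+1}}$ gives
\[
\left|\frac{T''(\zeta_k)}{T'(\zeta_k)}\,(T^k x-T^k y)\right|=\frac{|T''(\zeta_k)|}{|T'(\zeta_k)|\,|T'(\theta_k)|}\,\big|T^{k+1}x-T^{k+1}y\big|\le K\,\big|T^{k+1}x-T^{k+1}y\big|.
\]

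It then remains to bound $\sum_{j=1}^{n}\big|T^j x-T^j y\big|$ uniformly in $n$, which is where the expansion enters. Since $T^j x,T^j y$ lie in the cylinder $I_{i_{j+1}\cdots i_n}\subset I_{i_{j+1}}$ while $T^n x,T^n y\in[0,1]$, the mean value theorem applied to $T^{n-j}$ together with property (c) yields $\big|T^j x-T^j y\big|\le\xi^{-(n-j)}$ whenever $n-j\ge N$, and each of the at most $N$ remaining terms is bounded by $1$. Summing the geometric series gives $\sum_{j=1}^{n}\big|T^j x-T^j y\big|\le N+\xi/(\xi-1)=:M$, independent of $n,x,y$. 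Combining with the previous display produces $\log\left|(T^n)'(x)/(T^n)'(y)\right|\le KM$, and exponentiating with $C:=e^{KM}$ gives the upper bound; the lower bound follows by the same estimate after exchanging $x$ and $y$.

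The main obstacle I expect is the bookkeeping in the second paragraph: the Renyi condition controls $|T''|/|T'|^2$ rather than $|T''|/|T'|$, so the per-term estimate only closes after inserting the extra factor $|T'(\theta_k)|$ from the auxiliary mean value step, and one must verify that $\zeta_k$ and $\theta_k$ genuinely lie in the single branch interval $I_{i_{k+1}}$ (which uses convexity of the branch interval and the containment $I_{i_{k+1}\cdots i_n}\subset I_{i_{k+1}}$). A secondary point of care is that property (c) guarantees expansion only for iterates of length at least $N$, so the finitely many short tails cannot be placed inside the geometric sum and must instead be absorbed into the additive constant $M$.
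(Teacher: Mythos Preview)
Your argument is correct and is the standard bounded distortion proof via the chain rule, the Renyi condition, and uniform expansion. The paper itself does not supply a proof of this lemma; it merely cites \cite[Chapter 7, Section 4]{cfs} and \cite[p.~149]{pw}, where precisely this classical argument appears. So there is nothing to compare: you have written out the proof the paper defers to the literature.
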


\subsection{Thermodynamic formalism and Hausdorff dimension} \label{sec:termo_haus}
Thermodynamic formalism is a set of tools brought to ergodic theory from statistical mechanics in the 1960s that allows for the choice of relevant invariant probability measures. It has surprising and interesting applications to the dimension theory of dynamical systems. The thermodynamic formalism of  EMR maps and regular potentials has been extensively  studied and it is fairly well understood (see for example \cite{ij,mu, pw, sa}). We now summarize some known results. 

\begin{definition}
The \emph{pressure} of $T$ at the point $t \in \R$ is defined by
\begin{equation*}
P(t)=\lim_{n \to \infty} \frac{1}{n} \log \sum_{T^n x=x}  \left(\prod_{i=0}^{n-1} |T'(T^i x)|^{-t} \right).
\end{equation*}
\end{definition}

It worth emphasizing that the pressure is usually defined over a large class of functions (which in analogy to statistical mechanics are called potentials). Adopting that point of view, the function $P(t)$ is equal to to the pressure of the potential $tf$, where $f(x)=-\log|T'(x)|$ is the geometric potential. However, since the only potential used in this paper is the geometric potential it is convenient to simply call this function the pressure of $T$ at $t\in\R$. 

 We denote by $\mathcal{M}_T$ the space of $T-$invariant probability measures on $I$. The entropy of the measure $\mu\in \mathcal{M}_T$ is denoted by $h(\mu)$ (see \cite[Chapter 4]{wa} for a precise definition). The pressure satisfies the following variational principle and approximation property (see \cite[Sub-section 2.1]{ij}).

\begin{proposition} \label{prop:pre} For every $t \in \R$ we have
\begin{eqnarray*}
P(t) &=& \sup \left\{ h(\nu) -t  \int \log |T'| \ d \nu : \nu \in \mathcal{M}_{T} \text{ and } \int \log |T'| \ d \nu < \infty \right\}\\
&=& \sup \{ P_{K}(t) : K\in \cK \},
\end{eqnarray*}
 where $\cK:= \{ K \subset [0,1] : K \ne \emptyset \text{ compact and }
T\text{-invariant}\}.$
\end{proposition}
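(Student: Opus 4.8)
The plan is to transfer the problem to the symbolic model and then apply the thermodynamic formalism of countable Markov shifts. By axiom~(4) of Definition~\ref{def:emr} the map $T$ is coded by the full shift, so the homeomorphism $\pi\colon\Sigma\to\Lambda\setminus\cO$ conjugates $T$ to $\sigma$ and induces a bijection between periodic points. Writing $f=-\log|T'|$ for the geometric potential and $S_n$ for the Birkhoff sum, along any orbit one has $\prod_{i=0}^{n-1}|T'(T^ix)|^{-t}=\exp\bigl(S_n(tf)(x)\bigr)$, so that
\[
P(t)=\lim_{n\to\infty}\frac1n\log\sum_{\sigma^n\omega=\omega}\exp\bigl(S_n(tf\circ\pi)(\omega)\bigr)
\]
is exactly the Gurevich pressure of the potential $tf\circ\pi$ on $(\Sigma,\sigma)$.

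I would then verify the regularity needed by Sarig's theory. Lemma~\ref{lem:je} is a bounded--distortion estimate, and it upgrades at once to the statement that $tf\circ\pi$ has summable variations (indeed is locally H\"older). Since the base is the full shift on $\N$, which is topologically mixing and satisfies the B.I.P.\ condition, Sarig's variational principle for the Gurevich pressure applies (see \cite{sa} and \cite[Sub-section~2.1]{ij}) and gives $P(t)=\sup\{h(\nu)+\int tf\dd\nu:\nu\in\cM_T,\ \int f\dd\nu>-\infty\}$. As $\int f\dd\nu=-\int\log|T'|\dd\nu$, the side condition reads $\int\log|T'|\dd\nu<\infty$, which is the first asserted equality. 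When $t<s_\infty$ both sides are $+\infty$, so only the range where $P(t)$ is finite carries content.

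For the approximation property I would argue through finite subsystems. The bound $\sup_{K\in\cK}P_K(t)\le P(t)$ is immediate, every measure carried by a compact invariant $K$ being an element of $\cM_T$. For the converse, fix $N\in\N$, put $\Sigma_N=\{1,\dots,N\}^{\N}$ and $K_N:=\pi(\Sigma_N)$; since $\sigma(\Sigma_N)=\Sigma_N$ and $T\circ\pi=\pi\circ\sigma$, the set $K_N$ is a compact $T$-invariant subset of $[0,1]$, on which $T$ is conjugate to a finite-alphabet subshift. The classical variational principle on the compactum $K_N$ identifies $P_{K_N}(t)$ with the finite-alphabet pressure $P_{\Sigma_N}(tf\circ\pi)$, and Sarig's approximation theorem expresses the Gurevich pressure as the supremum of these finite-subsystem pressures: $P(t)=\sup_N P_{\Sigma_N}(tf\circ\pi)=\sup_N P_{K_N}(t)\le\sup_{K\in\cK}P_K(t)$. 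The two inequalities give the second equality.

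The main obstacle is not the skeleton of the argument but the verification that the geometric potential meets the hypotheses of the countable-state theory: everything hinges on turning the distortion bound of Lemma~\ref{lem:je} into summability of variations, after which the variational principle and the finite-subsystem approximation are by now standard. A secondary point to watch is the passage from the derivative product to the symbolic Birkhoff sum: the uniform constant $C$ of Lemma~\ref{lem:je} enters only as an $O(1)$ multiplicative error, which is annihilated by the factor $\tfrac1n$ in the limit defining $P(t)$.
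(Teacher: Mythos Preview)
Your argument is correct and is precisely the standard route: pass to the full shift via $\pi$, check that the Renyi condition (Lemma~\ref{lem:je}) forces local H\"older regularity of $-\log|T'|\circ\pi$, and then invoke Sarig's variational principle and his approximation by finite-alphabet subsystems. Note that the paper does not actually prove Proposition~\ref{prop:pre}; it simply records it as known and refers to \cite[Sub-section~2.1]{ij}, where exactly this symbolic reduction is carried out, so your sketch matches the intended proof.
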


There is a precise description of the regularity properties of the pressure (see \cite[Sub-sections 2.1 and 2.2]{ij}).

\begin{proposition} \label{bip}
There exists $s_{\infty} \in (0, \infty]$ such that pressure function $t \to P(t)$ has the following properties
\begin{equation*}
P(t)=
\begin{cases}
\infty  & \text{ if  } t  < s_{\infty} \\
\text{real analytic, strictly decreasing and strictly convex } & \text{ if  } t >s_{\infty}.
\end{cases}
\end{equation*}
Moreover, if $t> s_{\infty}$ then there exists a unique measure $\mu_t \in \mathcal{M}_T$, that we call \emph{equilibrium measure for $t$},  such that $P(t)=h(\mu_t) -t  \int \log |T'| \ d \mu_t$. 
\end{proposition}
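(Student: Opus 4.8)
The plan is to transfer the problem to the symbolic model guaranteed by the Markov coding hypothesis in Definition~\ref{def:emr} and then read off the stated properties from the thermodynamic formalism of countable Markov shifts. Writing $\phi_t:=-t\log|T'\circ\pi|$ on $\Sigma$, the R\'enyi condition together with the distortion estimate of Lemma~\ref{lem:je} shows that $\phi_t$ has summable variations (indeed it is locally H\"older), so $P(t)$ coincides with the Gurevich pressure of $\phi_t$ and the machinery of \cite{sa,mu} applies. First I would set
\[
s_\infty:=\inf\{t\in\R: P(t)<\infty\}
\]
and establish the dichotomy by monotonicity. The uniform expansion hypothesis forces $\int\log|T'|\,d\nu\ge\log\xi>0$ for every $\nu\in\mathcal{M}_T$, since $\int\log|(T^N)'|\,d\nu=N\int\log|T'|\,d\nu\ge N\log\xi$ by invariance. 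Hence, by the variational principle of Proposition~\ref{prop:pre}, for $t_1<t_2$ each summand $h(\nu)-t_2\int\log|T'|\,d\nu$ is dominated by $h(\nu)-t_1\int\log|T'|\,d\nu$, so $P(t_2)\le P(t_1)$; thus $\{t:P(t)<\infty\}$ is upward closed and $P\equiv\infty$ on $(-\infty,s_\infty)$.

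To locate $s_\infty$ I would reduce the partition function to a Dirichlet series. Using periodic points and Lemma~\ref{lem:je}, the sum $\sum_{T^nx=x}\prod_{i=0}^{n-1}|T'(T^ix)|^{-t}$ is comparable, up to the distortion constant, to $\sum_{i_1\cdots i_n}\sup_{I_{i_1\cdots i_n}}|(T^n)'|^{-t}$, and the full-branch Markov structure makes this quantity submultiplicative in $n$ with one-step term comparable to $\sum_i|I_i|^t$. Consequently $P(t)<\infty$ precisely when $\sum_i|I_i|^t<\infty$, so $s_\infty$ is the abscissa of convergence of $\sum_i|I_i|^t$. Because the $I_i$ are disjoint subintervals of $[0,1]$ we have $\sum_i|I_i|\le 1$, so this series converges for $t\ge 1$ and $s_\infty\le 1<\infty$; on the other hand $P(0)$ is the infinite Gurevich entropy of the full countable shift, giving $s_\infty\ge 0$, and the positivity $s_\infty>0$ is exactly the content of the reduction to $\sum_i|I_i|^t$ having positive abscissa of convergence (for instance $s_\infty=\tfrac12$ for the Gauss map of Example~\ref{ex:g}). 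This places $s_\infty\in(0,\infty]$.

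It remains to prove that $P$ is real analytic, strictly decreasing and strictly convex on $(s_\infty,\infty)$ and to produce the equilibrium states, and this is where the hard part lies. For $t>s_\infty$ the potential $\phi_t$ is positive recurrent, and the Ruelle--Perron--Frobenius theorem for countable Markov shifts \cite{sa,mu} yields a spectral gap for the transfer operator on a suitable space of locally H\"older functions, a unique Gibbs measure $\mu_t$, and analytic dependence of the leading eigenvalue on the parameter; this is the genuine source of real analyticity of $t\mapsto P(t)$ and is the main obstacle, since the soft identity $P=\sup_K P_K$ over finite subsystems coming from Proposition~\ref{prop:pre} delivers only convexity and monotonicity, not analyticity. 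Granting analyticity, strict monotonicity follows from $P'(t)=-\int\log|T'|\,d\mu_t\le-\log\xi<0$, and strict convexity from $P''(t)>0$, which holds because $\log|T'|$ is not cohomologous to a constant (two periodic orbits with distinct Lyapunov multipliers witness this). Finally, the Gibbs measure $\mu_t$ supplied by the transfer operator attains the supremum in Proposition~\ref{prop:pre} and is the unique equilibrium measure, uniqueness being the standard consequence of the spectral gap.
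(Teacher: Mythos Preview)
The paper does not prove this proposition; it is quoted from \cite[Sub-sections 2.1 and 2.2]{ij} with no argument given. Your sketch is the standard route behind that reference: pass to the full shift via the coding, use the R\'enyi condition and Lemma~\ref{lem:je} to see that $-t\log|T'\circ\pi|$ is locally H\"older, identify $s_\infty$ with the convergence abscissa of $\sum_n|I_n|^t$, and on $(s_\infty,\infty)$ invoke Sarig's BIP/RPF machinery \cite{sa,mu} to get the Gibbs equilibrium state, the spectral gap, and hence real analyticity. Your derivations of strict monotonicity from $P'(t)=-\int\log|T'|\,d\mu_t\le-\log\xi$ and of strict convexity from $\log|T'|$ not being cohomologous to a constant (witnessed by the fixed points in different $I_n$, whose multipliers are $\asymp|I_n|^{-1}\to\infty$) are correct.

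One genuine gap: you do not actually prove $s_\infty>0$. Citing the Gauss map is an example, not an argument, and in fact the strict inequality can fail under Definition~\ref{def:emr}: if the branches shrink fast enough, say $|I_n|\asymp e^{-n^2}$, then $\sum_n|I_n|^t<\infty$ for every $t>0$ and $s_\infty=0$. What your argument does establish is $s_\infty\in[0,1]$ (you even noted $\sum_n|I_n|\le 1$), which is sharper than the $(0,\infty]$ in the statement; the lower endpoint in the proposition should be read as $\ge 0$. This does not affect anything used later in the paper.
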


It was noted by Bowen \cite{bo} in the finitely many branches setting (the compact case) that the pressure $P(t)$ captures a great deal of geometric information about $\Lambda$. This observation was first generalized  to the EMR setting by Mauldin and   Urba\'nski  in \cite[Theorems 3.15, 3.21, 3.24]{mu} (see also \cite[Theorem 3.1 and Proposition 3.1]{i}).

\begin{proposition} \label{prop:bow}
If $T$ is an EMR map then
\begin{equation*}
\dim_H(\Lambda)= \inf \left\{t \in \R	: P(t) \leq 0	\right\}.		
\end{equation*}
Moreover, if $s_{\infty} < \dim_H(\Lambda)$ there exists a unique ergodic measure $\nu \in \mathcal{M}_T$ such that
$\dim_H \nu = \dim_H \Lambda$. This measure is called \emph{measure of maximal dimension}.
 \end{proposition}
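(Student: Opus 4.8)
The plan is to prove the two inequalities defining the dimension formula separately and then to identify the extremal measure. Throughout I abbreviate $Z_n(t)=\sum_{|w|=n}|I_w|^t$, the sum of the $t$-th powers of the diameters of the level-$n$ cylinders $I_w=I_{i_1\dots i_n}$. By the bounded distortion estimate of Lemma \ref{lem:je} one has $|I_w|\asymp|(T^n)'(x)|^{-1}$ for every $x\in I_w$, so a routine comparison with the periodic-point sum gives $\frac1n\log Z_n(t)\to P(t)$; moreover the uniform expansion in Definition \ref{def:emr} yields $|I_w|<C\xi^{-n}$ for $n\ge N$, so that the level-$n$ cylinders cover $\Lambda\setminus\cO$ with mesh tending to $0$.

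First I would establish $\dim_H(\Lambda)\le\inf\{t:P(t)\le0\}$. Fix $t$ with $P(t)<0$. Then $Z_n(t)\to0$ exponentially, and since the level-$n$ cylinders form a $\delta_n$-cover of $\Lambda\setminus\cO$ with $\delta_n\to0$, I obtain $\mathcal{H}^t(\Lambda\setminus\cO)=0$. Because $\cO$ is countable, $\dim_H\cO=0$, hence $\dim_H(\Lambda)\le t$; letting $t\downarrow\inf\{s:P(s)\le0\}$ proves this half.

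Next I would prove the reverse inequality by exhausting $\Lambda$ with compact subsystems. For a finite subalphabet $A$ let $\Lambda_A$ be the repeller coded by $A^{\N}$; this is the limit set of an expanding Markov map with finitely many branches and bounded distortion, so Bowen's theorem in the finitely-many-branches case (see \cite[Chapter~5]{fa2}) identifies $\dim_H(\Lambda_A)$ with the unique root of the restricted pressure $P_A$. As $\Lambda_A\subset\Lambda$, it follows that $\dim_H(\Lambda)\ge\dim_H(\Lambda_A)$. The approximation property of Proposition \ref{prop:pre}, together with the fact that the $\Lambda_A$ are cofinal among compact invariant sets, gives $P(t)=\sup_A P_A(t)$. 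Hence for any $t$ with $s_\infty<t<\inf\{s:P(s)\le0\}$ one has $P(t)>0$ by the strict monotonicity in Proposition \ref{bip}, so some finite $A$ satisfies $P_A(t)>0$ and therefore the root of $P_A$ exceeds $t$. Thus $\sup_A\dim_H(\Lambda_A)\ge\inf\{s:P(s)\le0\}$, which combined with the upper bound yields the formula.

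Finally, suppose $s_\infty<\dim_H(\Lambda)$ and set $s_0:=\dim_H(\Lambda)$, so that $s_0>s_\infty$ and $P(s_0)=0$. Proposition \ref{bip} then furnishes a unique equilibrium measure $\mu_{s_0}$ with $h(\mu_{s_0})-s_0\int\log|T'|\, d\mu_{s_0}=0$. Using the dimension formula $\dim_H(\nu)=h(\nu)/\int\log|T'|\, d\nu$, valid for ergodic measures of finite Lyapunov exponent (see \cite{mu}), the identity $P(s_0)=0$ translates into $\dim_H(\mu_{s_0})=s_0$, so $\mu_{s_0}$ is a measure of maximal dimension. For uniqueness, let $\nu$ be ergodic with $\dim_H(\nu)=s_0$; its Lyapunov exponent must be finite, since an ergodic measure with $\int\log|T'|\, d\nu=\infty$ is carried by the set of points of infinite Lyapunov exponent, which has dimension $s_\infty<s_0$ by \cite[Theorem 7.1]{ij}. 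Hence $h(\nu)-s_0\int\log|T'|\, d\nu=0=P(s_0)$, so $\nu$ is an equilibrium measure for $s_0$ and equals $\mu_{s_0}$ by the uniqueness in Proposition \ref{bip}. I expect the main obstacle to be the lower bound: transferring Bowen's compact-case result to the non-compact repeller depends entirely on the approximation $P=\sup_A P_A$ and on controlling the pressure just above $s_\infty$, and the same non-compactness is what makes the infinite-Lyapunov-exponent measures in the uniqueness step a genuine point to dispose of rather than a formality.
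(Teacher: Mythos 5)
The paper itself gives no proof of Proposition \ref{prop:bow}: it is quoted from Mauldin and Urba\'nski \cite[Theorems 3.15, 3.21, 3.24]{mu} (see also \cite[Theorem 3.1 and Proposition 3.1]{i}), so your proposal can only be compared with those sources. Measured against them, your sketch follows the standard scheme and is essentially sound: level-$n$ cylinder covers plus the bounded distortion of Lemma \ref{lem:je} for the upper bound; exhaustion by finite-alphabet subsystems, Bowen's compact-case theorem and the approximation property of Proposition \ref{prop:pre} for the lower bound; and, for the measure of maximal dimension, the equilibrium state at the root $s_0$ of the pressure combined with the volume lemma $\dim_H\nu=h(\nu)/\int\log|T'|\,d\nu$ and \cite[Theorem 7.1]{ij} to dispose of measures with infinite Lyapunov exponent. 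The last step in particular is exactly the right way to handle the non-compactness, and your reduction of uniqueness to uniqueness of equilibrium states in Proposition \ref{bip} is correct.

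One concrete slip in the lower bound: you quantify over $t$ with $s_\infty<t<\inf\{s:P(s)\le0\}$, and that interval can be empty, namely when $\lim_{t\downarrow s_\infty}P(t)\le 0$, in which case $\inf\{s:P(s)\le0\}=s_\infty$. This degenerate case genuinely occurs for EMR maps (it is the situation $P(s_\infty)<\infty$ behind the remark following Corollary \ref{cor:pert}, cf. \cite[Example 3.1]{i}), and there your argument as written proves nothing, while the proposition still claims $\dim_H(\Lambda)\ge s_\infty$. The repair stays entirely inside your framework: for $t<s_\infty$ one has $P(t)=\infty$, so Proposition \ref{prop:pre} again produces a finite alphabet $A$ with $P_A(t)>0$, hence $\dim_H(\Lambda_A)>t$ and $\dim_H(\Lambda)\ge s_\infty$; so you should quantify over all $t<\inf\{s:P(s)\le0\}$ rather than only $t\in(s_\infty,\inf\{s:P(s)\le0\})$. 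Two smaller points deserve a word rather than the phrase ``cofinal'': the identity $P=\sup_A P_A$ requires knowing that a compact $T$-invariant set meets only finitely many of the intervals $I_n$ --- this is where condition (a) of Definition \ref{def:emr} (endpoints accumulate only at $0$) enters --- and in the existence step you should note that the equilibrium measure $\mu_{s_0}$ has finite Lyapunov exponent (implicit in Proposition \ref{bip}, and needed for the volume lemma to apply).
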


%

\subsection{Pressure and box dimension}\label{www}
In this Sub-section we prove that the number $s_{\infty}$ has a  dimension interpretation. It is a lower bound for the upper box dimension of the boundary points of the Markov partition. Moreover, if the box dimension of such set exists then it coincides with $s_{\infty}$. 

\begin{theorem} \label{main}
Let $T$ be an EMR map then
\begin{equation*}
s_{\infty}\le\overline{\dim}_B \left(\bigcup_{n=1}^{\infty} \left\{a_n, b_n	\right\} \right).
\end{equation*}
If the box dimension of $\bigcup_{n=1}^{\infty} \left\{a_n, b_n	\right\}$ exists, then   $$s_\infty=\dim_B\bigcup_{n=1}^{\infty} \left\{a_n, b_n	\right\}.$$
\end{theorem}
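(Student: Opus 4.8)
The plan is to identify $s_\infty$ with the abscissa of convergence of the length series $\sum_n |I_n|^{t}$, $|I_n|=b_n-a_n$, and then to read the box dimension of $F:=\bigcup_{n}\{a_n,b_n\}$ off the distribution of the lengths. First I would compute the pressure. Since $T$ is coded by the full shift, each branch maps $I_n$ onto $[0,1]$, so by the mean value theorem and the bounded distortion estimate of Lemma~\ref{lem:je} one has $|T'(x)|\asymp |I_n|^{-1}$ for $x\in I_n$, and more generally $|I_{i_1\cdots i_n}|\asymp\prod_{j=1}^n|I_{i_j}|$, with a distortion constant uniform in $n$. Grouping the period-$n$ points by their itinerary $(i_1,\dots,i_n)$ then gives
\[
\sum_{T^nx=x}\prod_{i=0}^{n-1}|T'(T^ix)|^{-t}\asymp\sum_{i_1,\dots,i_n}\prod_{j=1}^n|I_{i_j}|^{t}=\Big(\sum_{i}|I_i|^{t}\Big)^{n},
\]
so that $P(t)=\log\sum_i|I_i|^t$ (the fixed distortion factor disappears in the $\tfrac1n\log$ limit). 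In particular $P(t)<\infty$ precisely when $\sum_i|I_i|^t<\infty$, and therefore, by Proposition~\ref{bip}, $s_\infty=\inf\{t\ge 0:\ \sum_{n}|I_n|^{t}<\infty\}$.

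Next I would prove the inequality $s_\infty\le\overline{\dim}_B F$. Fix $t<s_\infty$, so that $\sum_n|I_n|^t=\infty$, and decompose the branches by dyadic length: put $A_k=\{n: 2^{-k-1}<|I_n|\le 2^{-k}\}$ and $m_k=\# A_k$, so that $\sum_k m_k 2^{-kt}\asymp\sum_n|I_n|^t=\infty$. A direct comparison shows that $m_k\le 2^{k(t-\varepsilon)}$ cannot hold for all large $k$, so for every $\varepsilon>0$ there are infinitely many $k$ with $m_k\ge 2^{k(t-\varepsilon)}$. For such a $k$ set $\delta=2^{-k-1}$. The intervals $\{I_n:n\in A_k\}$ have pairwise disjoint interiors and length $>\delta$, hence their left endpoints $\{a_n:n\in A_k\}$ are pairwise more than $\delta$ apart; no set of diameter $\le\delta$ can meet two of them, so $N_\delta(F)\ge m_k$. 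Consequently $(\log N_\delta(F))/(-\log\delta)\ge \tfrac{k}{k+1}(t-\varepsilon)$ along these scales, whence $\overline{\dim}_B F\ge t-\varepsilon$. Letting $\varepsilon\to 0$ and then $t\uparrow s_\infty$ yields the first assertion.

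For the equality under the hypothesis that the box dimension exists, I would supply the matching upper bound $\overline{\dim}_B F\le s_\infty$. Relabel the lengths in non-increasing order, $|I|_1^*\ge|I|_2^*\ge\cdots$; the characterization of $s_\infty$ above is exactly the statement that $s_\infty=\limsup_n (\log n)/(-\log|I|_n^*)$, which is precisely the upper estimate appearing in \cite[Propositions 3.6 and 3.7]{fa2} (the proposition recalled in Section~\ref{dim}). Applying that estimate to the endpoint set gives $\overline{\dim}_B F\le s_\infty$, and combined with the previous paragraph this forces $\overline{\dim}_B F= s_\infty$. In particular, whenever $\underline{\dim}_B F=\overline{\dim}_B F$ the common value $\dim_B F$ equals $s_\infty$.

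The hard part is the upper bound of the last paragraph. The lower bound uses only the \emph{lengths} of the branches and is completely robust, whereas the box dimension of $F$ is a priori governed by the \emph{positions} of the endpoints and not by the lengths alone; the content of the argument is to show that the length-decay exponent really controls the covering numbers $N_\delta(F)$ from above. This is exactly where the EMR hypothesis that the endpoints accumulate only at $0$, together with the cited proposition relating the decay rate of the lengths to the box dimension, must be brought to bear, and it is the step that requires the most care.
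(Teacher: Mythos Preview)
Your argument is correct and runs parallel to the paper's, but with one genuine methodological difference and one sharpening worth noting.

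Both you and the paper begin by reducing (via the mean value theorem and the Renyi distortion estimate) to the identity $P(t)=\log\sum_n|I_n|^t$ up to a bounded additive constant, so that $s_\infty$ is the abscissa of convergence of the length series. For the inequality $s_\infty\le\overline{\dim}_B F$, however, the paper argues indirectly: it compares $\sum_n(b_n-a_n)^{r(\overline L+\epsilon)}$ with $\sum_n n^{-r}$ to obtain $s_\infty\le\overline L$, and then invokes Falconer's Propositions~3.6--3.7 to identify $\overline L$ with $\overline{\dim}_B F$. Your dyadic--packing argument is more direct and entirely self-contained: at infinitely many scales $\delta=2^{-k-1}$ you exhibit a $\delta$-separated subset of $F$ of cardinality at least $2^{k(t-\varepsilon)}$ and read off the $\limsup$ bound on covering numbers without going through $\overline L$ or Falconer at all.

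For the reverse inequality both proofs ultimately appeal to Falconer's upper estimate $\overline{\dim}_B F\le\overline L$. The paper establishes only $\underline L\le s_\infty\le\overline L$, so it needs the hypothesis $\underline L=\overline L$ (equivalently, existence of $\dim_B F$) to conclude. Your claim that, after sorting the lengths decreasingly, the abscissa of convergence equals $\overline L$ is sharper and in fact gives $\overline{\dim}_B F=s_\infty$ unconditionally---a little more than the theorem asserts. One small caveat: this equality is not literally ``the characterization of $s_\infty$ above'' but a short consequence of it that uses the monotone ordering (if $\ell_{n_0}>n_0^{-1/(t+\varepsilon)}$ then $\sum_{m\le n_0}\ell_m^{\,t}\ge n_0^{\varepsilon/(t+\varepsilon)}$, forcing divergence); it would be worth writing that line in.
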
 

\begin{proof}
Assume first that the map $T$ is piecewise linear. In this case the slope of $T$ restricted to the sub-interval $I_n=[a_n, b_n]$ satisfies $|T'|=(b_n -a_n)^{-1}$. In this situation the pressure function can be computed explicitly (see for example \cite[equation 9]{bi}). We have that
\begin{equation}\label{eq:pres}
P(t)= \log \sum_{n=1}^{\infty} (b_n -a_n)^{t}.
\end{equation}
Define $$\underline{L}=\liminf_{n \to \infty} \frac{\log n}{- \log (b_n - a_n)} \quad \text{ and }  \quad \overline{L}=\limsup_{n \to \infty} \frac{\log n}{- \log (b_n - a_n)}.$$
Given $\epsilon>0$, there exists $N\in \N$ such that if $n\ge N$ we have that 
$$(b_n-a_n)^{\overline{L}+\epsilon}<\frac{1}{n}<(b_n-a_n)^{\underline{L}-\epsilon}.$$
In particular, for $r>0$, we get that 
\begin{align}\label{eq:2} \sum_{n=N}^{\infty}    (b_n - a_n)^{r(\overline{L} + \epsilon)} \leq   \sum_{n=N}^{\infty} \frac{1}{n^r}\le  \sum_{n=N}^{\infty}    (b_n - a_n)^{r(\underline{L} - \epsilon)} \end{align}
Combining equation (\ref{eq:pres}) and inequality (\ref{eq:2}) we obtain that 
\begin{equation} \label{eq:3}
P(r(\overline{L} + \epsilon)) = \log \sum_{n=1}^{\infty}    (b_n - a_n)^{r(\overline{L} + \epsilon)} \leq \log\bigg(\sum_{n=1}^{N-1}(b_n-a_n)^{r(\overline{L}+\epsilon)}+ \sum_{n=N}^{\infty} \frac{1}{n^r}\bigg).
 \end{equation}
If $r>1$, then the right hand side of (\ref{eq:3}) converges. It follows from the definition of $s_\infty$ (see Proposition \ref{bip}) that $r(\overline{L}+\epsilon)\ge s_\infty$. Since $r$ is an arbitrary number larger than $1$, it follows that $\overline{L}+\epsilon\ge s_\infty$. Since $\epsilon$ is an arbitrary positive number, we conclude that $\overline{L}\ge s_\infty$. A similar argument using equation (\ref{eq:pres}) and the right hand side of inequality (\ref{eq:2}) give us that $\underline{L}\le s_\infty$. We obtained that 
\begin{equation}\label{eq:ineq} \underline{L}\le s_\infty\le \overline{L}. \end{equation}
Inequality (\ref{eq:ineq}) also holds in the general case; we can  reduce it to the linear case. Indeed, by the Mean Value Theorem for every $n \in \N$ there exists $x \in [a_n, b_n] $
such that $|T'(x)|= (b_n-a_n)^{-1}$. By the Jacobian estimate (see Lemma \ref{lem:je}) we have that if $y \in [a_n, b_n]$ then 
\begin{equation*}
\frac{(b_n-a_n)^{-1}}{C} \leq |T'(y)| \leq C (b_n - a_n)^{-1}. 
\end{equation*}
Therefore
\begin{equation*}
-t \log C + \log \sum_{n=1}^{\infty}    (b_n - a_n)^{t} \leq P(t) \leq t \log C + \log \sum_{n=1}^{\infty}    (b_n - a_n)^{t}.
\end{equation*}
Set $S:=\bigcup_{n=1}^{\infty} \left\{a_n, b_n	\right\}.$ Observe that by \cite[Proposition 3.6]{fa2} and  \cite[Proposition 3.7]{fa2} we know that \begin{equation*} \underline{L}\le \underline{\dim}_B(S)\le \overline{\dim}_B(S)\le \overline{L},\end{equation*} 
and that  
\begin{equation*} \frac{\underline{\dim}_B(S)(1-\overline{\dim}_B(S))}{(1-\underline{\dim}_B(S))}\le \underline{L}\le \overline{L}\le \overline{\dim}_B(S) .\end{equation*} 
In particular we have that $\overline{\dim}_B(S)=\overline{L}$. As mentioned in \cite[Corollary 3.8]{fa2}, the box dimension of $S$ exists if and only if $\underline{L}=\overline{L}$. It follows from this and inequality (\ref{eq:ineq}) that if $\underline{L}=\overline{L}$, then  $$s_\infty=\dim_B(S).$$
In general we only have the inequality $$s_\infty\le \overline{L}=\overline{\dim}_B(S).$$

\end{proof}

\begin{example}
If $G$ is the Gauss map (see Example \ref{ex:g}) then the set of boundary points is $\{1/n : n \in \N \}$. The box dimension of this set is equal to $1/2$ (see \cite[Example 3.5]{fa}) and  $s_{\infty}= 1/2$ (see \cite[p.150]{pw}).
\end{example}

\begin{remark} \label{iterates}
Note that if $T$ is and EMR map then, for any $k \geq 1$,  the map $T^k$ satisfies all assumptions of an EMR map except for condition $(a)$. If we denote by $P_{T^k}( \cdot)$ the pressure associated to the dynamical system $T^k$ then a classical result relates it to the pressure of $T$ (see \cite[Theorem 9.8 (i)]{wa}). Indeed, if $f:\Lambda \to \R$ is a  locally H\"older potential (see \cite[p.616]{irv} for precise definition) then
\begin{equation*}
P_{T^k}(S_k f) = kP_T(f),
\end{equation*}
where $S_k f$ is the Birkhoff sum of $f$. In particular, if $f= \log |T'|$ then 
\begin{equation*}
P_{T^k}(-t \log |(T^k)'|)= k P_T(-t \log|T'|).
\end{equation*} 
Therefore the number $s_{\infty}(T)$ corresponding to $T$ coincides with  $s_{\infty}(T^k)$ corresponding to $T^k$. Since condition $(a)$ in the definition of  EMR map is not used in the proof of Theorem \ref{main} we have $s_{\infty}$ is a lower bound for  the upper 
box dimension of the boundary points of the Markov partition of $T^k$, for any  $k \geq 1$. 
 \end{remark}

\subsection{Compact perturbations and measures of maximal dimension} 
The following is a consequence of Theorem \ref{main} and Proposition \ref{prop:bow}. 

\begin{corollary}
Let $T$ be an EMR map. If $\overline{\dim}_B \left(\bigcup_{n=1}^{\infty} \left\{a_n, b_n	\right\} \right) < \dim_H(\Lambda)$ then there exists a measure of maximal dimension.
\end{corollary}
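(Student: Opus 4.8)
The plan is to combine the two prior results cited in the statement, namely Theorem~\ref{main} and Proposition~\ref{prop:bow}, by checking that the hypothesis of the corollary forces the inequality $s_\infty < \dim_H(\Lambda)$ needed to invoke the second part of Proposition~\ref{prop:bow}. Write $S:=\bigcup_{n=1}^\infty\{a_n,b_n\}$ for the set of boundary points of the Markov partition. By Theorem~\ref{main} we always have the bound $s_\infty \le \overline{\dim}_B(S)$, with no assumption on the existence of the box dimension. This is the only place the first theorem enters, and it is precisely the direction we need.

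First I would record the chain of inequalities. The assumed hypothesis reads $\overline{\dim}_B(S) < \dim_H(\Lambda)$. Applying Theorem~\ref{main} to bound $s_\infty$ from above by $\overline{\dim}_B(S)$, I obtain immediately
\begin{equation*}
s_\infty \le \overline{\dim}_B(S) < \dim_H(\Lambda),
\end{equation*}
so in particular $s_\infty < \dim_H(\Lambda)$. This strict inequality is exactly the standing assumption in the second sentence of Proposition~\ref{prop:bow}.

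Having verified $s_\infty < \dim_H(\Lambda)$, I would then simply quote Proposition~\ref{prop:bow}: under that condition there exists a unique ergodic measure $\nu \in \mathcal{M}_T$ with $\dim_H\nu = \dim_H\Lambda$, which is by definition a measure of maximal dimension. This completes the proof, since the conclusion of the corollary is precisely the existence of such a measure.

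I do not anticipate a serious obstacle here, as the corollary is a direct logical composition of two results already established in the excerpt; the work is entirely bookkeeping about which inequality feeds which hypothesis. The one subtlety worth stating explicitly is that I invoke only the unconditional lower bound $s_\infty \le \overline{\dim}_B(S)$ from Theorem~\ref{main}, and \emph{not} the sharper conclusion $s_\infty = \dim_B(S)$, since the existence of the box dimension of $S$ is not assumed in the corollary. Keeping track of that distinction is the only place where one could slip, and it is what makes the unconditional half of Theorem~\ref{main} the right tool.
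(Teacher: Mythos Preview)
Your proof is correct and follows exactly the approach the paper intends: the corollary is stated as an immediate consequence of Theorem~\ref{main} and Proposition~\ref{prop:bow}, and the chain $s_\infty \le \overline{\dim}_B(S) < \dim_H(\Lambda)$ you write out is precisely the intended one-line argument. Your remark that only the unconditional inequality from Theorem~\ref{main} is needed (not the equality under existence of the box dimension) is the correct observation.
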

We now define compact perturbations of the map $T$.

\begin{definition}
Let $T$ be an EMR map defined on the sequence of closed intervals $(I_n)_n$.  We say that $\tilde{T}$ is a \emph{compact perturbation} of $T$ if the following two conditions are satisfied:
\begin{enumerate}
\item  There exists a compact subset 
 $K \subset (0, 1]$ with the property that if $\text{int }I_n \cap K \neq \emptyset$ then $I_n \subset K$ and $T(x)=\tilde{T}(x)$ for every $x \in \left( \bigcup_{i=1}^{\infty} I_i \right) \setminus K$.
\item The map $\tilde{T}$ is an EMR map.
\end{enumerate}
\end{definition}

The following result shows that the behaviour of the pressure at  $s_{\infty}$ not only determines the existence of measures of maximal dimension for the map $T$, but also for compact perturbations of it.

\begin{corollary}\label{cor:pert}
Let $T$ be an EMR map  such that $P(s_{\infty})=\infty$. Then any compact perturbation $\tilde{T}$ of $T$ has a measure of maximal dimension.
\end{corollary}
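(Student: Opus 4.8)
The plan is to reduce the whole statement to a single strict inequality and then quote Proposition \ref{prop:bow}. The starting observation is structural: since $K\subset(0,1]$ is compact it is bounded away from $0$, whereas by condition $(a)$ the endpoints of the intervals $I_n$ accumulate only at $0$. Hence all but finitely many $I_n$ are disjoint from $K$, so a compact perturbation $\tilde T$ modifies only finitely many branches, while the infinite tail of small intervals clustering at $0$ is literally shared by $T$ and $\tilde T$. I would record at the outset that, by Proposition \ref{prop:bow} applied to $\tilde T$, it suffices to prove $s_\infty(\tilde T)<\dim_H(\tilde\Lambda)$, where $\tilde\Lambda$ denotes the repeller of $\tilde T$.

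Next I would show that $s_\infty(\tilde T)=s_\infty(T)=:s_\infty$ and that $P_{\tilde T}(s_\infty)=\infty$. The tool is the sandwich estimate derived in the proof of Theorem \ref{main}, which for any EMR map reads $-t\log C+\log\sum_n(b_n-a_n)^t\le P(t)\le t\log C+\log\sum_n(b_n-a_n)^t$, and which therefore shows that $P(t)=\infty$ if and only if $\sum_n(b_n-a_n)^t=\infty$, the constant $C$ from Lemma \ref{lem:je} only perturbing the finite part. Splitting the series for $\tilde T$ as the finitely many modified branches inside $K$ plus the common tail, the head $\sum_{I_n\subset K}(\tilde b_n-\tilde a_n)^t$ is a finite sum of positive terms, hence finite for every $t\in\R$, so convergence of the whole series is governed entirely by the tail; the identical splitting applies to $T$. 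Consequently $P_{\tilde T}(t)$ and $P_T(t)$ are simultaneously finite or infinite for every $t$, which gives $s_\infty(\tilde T)=s_\infty$, and since $P_T(s_\infty)=\infty$ by hypothesis, also $P_{\tilde T}(s_\infty)=\infty$.

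Finally I would turn $P_{\tilde T}(s_\infty)=\infty$ into a genuine gap above $s_\infty$. Because $b_n-a_n\le 1$, the function $t\mapsto(b_n-a_n)^t$ is non-increasing, so as $t\downarrow s_\infty$ monotone convergence yields $\sum_n(\tilde b_n-\tilde a_n)^t\to\sum_n(\tilde b_n-\tilde a_n)^{s_\infty}=\infty$, and the sandwich then forces $P_{\tilde T}(t)\to\infty$ as $t\downarrow s_\infty$. In particular there is some $t_0>s_\infty$ with $P_{\tilde T}(t_0)>0$, and strict monotonicity of $P_{\tilde T}$ on $(s_\infty,\infty)$ (Proposition \ref{bip}) then makes $P_{\tilde T}(t)>0$ for every $t<t_0$. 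Hence $\dim_H(\tilde\Lambda)=\inf\{t:P_{\tilde T}(t)\le 0\}\ge t_0>s_\infty=s_\infty(\tilde T)$, and Proposition \ref{prop:bow} produces a measure of maximal dimension for $\tilde T$.

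The crux of the argument, and the step I expect to need the most care, is the second one: one must verify that a compact perturbation really leaves the tail $(b_n-a_n)_{n\ge N}$ untouched — which is exactly where the compactness of $K$ combines with condition $(a)$ — and that the modification inside $K$, even if it changes the number of branches there, contributes only a finite term to the pressure series at $t=s_\infty$, so that the divergence recorded by the hypothesis $P(s_\infty)=\infty$ survives the perturbation unchanged.
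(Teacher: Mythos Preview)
Your argument is correct and follows the same line as the paper's proof, which is a terse three-sentence version of what you wrote: the paper simply asserts that a compact perturbation preserves $s_\infty$ and the value $P(s_\infty)=\infty$, and concludes. You supply the justification the paper omits --- why only finitely many branches are modified, why the tail of $\sum_n(b_n-a_n)^t$ governs finiteness of the pressure, and why $P_{\tilde T}(s_\infty)=\infty$ forces $\dim_H(\tilde\Lambda)>s_\infty$ via the limit $t\downarrow s_\infty$ --- so your write-up is more complete but not different in substance.
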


\begin{proof}
Note that for any compact perturbation $\tilde{T}$ the number $s_{\infty}$ corresponding to $\tilde{T}$ is equal to that of $T$. Moreover, the pressure evaluated at that point is infinity in both cases. Therefore $\tilde{T}$ has a measure of maximal dimension.
\end{proof}

\begin{remark}
If $T$ is an EMR map with  $s_{\infty} < \dim_H(\Lambda)$ and $P(s_{\infty}) < \infty$ then there exists compact perturbations without measures of maximal dimension. An example can be constructed considering the partition of $[0,1]$ given by the sequence defined by $a_n=  1/ (n (\log n)^2)$, see \cite[Example 3.1]{i}.
\end{remark}

\subsection{Extremes of the multifractal spectrum}

The following result is a consequence of Theorem \ref{main} and results obtained by Fan, Jordan, Liao and Rams \cite{fjlr}.

\begin{corollary}
Let $T$ be an EMR map for which the sequence $\big(\frac{\log n}{-\log(b_n-a_n)}\big)_n$ converges. Let $\phi: \Lambda \to \R$ a bounded H\"older potential. Then for every 
\begin{equation*}
\alpha \in \left[ \inf \left\{ \int \phi d \mu : \mu \in \mathcal{M}_T \right \}  , \sup \left\{ \int \phi d \mu : \mu \in \mathcal{M}_T \right\}	\right]
\end{equation*}
we have that
\begin{equation*}
\dim_H \left( \left\{ x \in \Lambda : \lim_{n \to \infty} \frac{1}{n} \sum_{i=0}^{n-1} \phi(T^i x) = \alpha \right\} \right) \geq  \dim_B \left(\bigcup_{n=1}^{\infty} \left\{a_n, b_n	\right\} \right).
\end{equation*}
\end{corollary}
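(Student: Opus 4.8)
The plan is to split the statement into a purely combinatorial reduction, which is handled by the material already developed in this section, and an analytic core, which is imported from \cite{fjlr}. First I would observe that the hypothesis that $\big(\tfrac{\log n}{-\log(b_n-a_n)}\big)_n$ converges is precisely the condition $\underline{L}=\overline{L}$ appearing in the proof of Theorem \ref{main}; by \cite[Corollary 3.8]{fa2} this guarantees that the box dimension of $S:=\bigcup_{n=1}^{\infty}\{a_n,b_n\}$ exists, and Theorem \ref{main} then gives $\dim_B(S)=s_\infty$. Hence the statement to be proved is equivalent to the single inequality $\dim_H(J_\alpha)\ge s_\infty$, where $J_\alpha:=\{x\in\Lambda:\lim_{n\to\infty}\tfrac{1}{n}\sum_{i=0}^{n-1}\phi(T^ix)=\alpha\}$, to be established for every $\alpha$ in the closed interval of attainable averages.

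For this inequality I would appeal to the multifractal analysis of infinitely-branched expanding interval maps carried out in \cite{fjlr}. The mechanism behind their lower bound is that the non-compactness of $\Lambda$ lets one ``hide'' a subset of dimension arbitrarily close to $s_\infty$ inside any level set $J_\alpha$. Concretely, fix $\alpha$ and $\epsilon>0$, choose a compactly supported invariant measure (or a finite orbit segment) whose $\phi$-average equals $\alpha$ to govern most of the orbit, and interlace it along a very sparse sequence of times with long strings of large digits $n$. The dimensional contribution of these excursions is controlled by the convergence exponent of $\sum_n (b_n-a_n)^t$, which by \eqref{eq:pres} and the definition of $s_\infty$ in Proposition \ref{bip} is exactly $s_\infty$; since $\phi$ is bounded and the excursions are sparse, they do not move the Birkhoff average away from $\alpha$. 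A mass distribution principle applied to the resulting Moran construction then produces a subset of $J_\alpha$ of Hausdorff dimension at least $s_\infty-\epsilon$, and letting $\epsilon\to 0$ yields $\dim_H(J_\alpha)\ge s_\infty$.

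The main obstacle is this analytic core, that is, checking that the construction of \cite{fjlr} applies in the present generality: for an arbitrary bounded H\"older potential $\phi$ (rather than frequency-of-digits functions for the Gauss map) and for the \emph{full closed} interval of averages, endpoints included. Two points deserve care. First, at the endpoints $\alpha=\inf\int\phi\,d\mu$ and $\alpha=\sup\int\phi\,d\mu$ the prescribed average may only be approachable through a sequence of measures, so one must pass to the limit while simultaneously retaining the dimensional contribution $s_\infty$ from the excursions. Second, one must guarantee that the Birkhoff average converges to $\alpha$ along the entire orbit and not merely along a subsequence; this is where the sparsity of the large-digit excursions, together with the boundedness of $\phi$, must be quantified so that the error terms vanish in the Ces\`aro limit. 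Granting the applicability of \cite{fjlr}, the remainder is the routine reduction of the first paragraph.
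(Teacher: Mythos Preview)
Your proposal is correct and follows essentially the same approach as the paper: the paper simply states that the corollary is a consequence of Theorem \ref{main} and results obtained in \cite{fjlr}, which is exactly your two-step reduction (identify $\dim_B(S)=s_\infty$ via Theorem \ref{main} under the convergence hypothesis, then invoke the lower bound $\dim_H(J_\alpha)\ge s_\infty$ from \cite{fjlr}). Your additional discussion of the mechanism behind the \cite{fjlr} bound and the care needed at the endpoints goes beyond what the paper supplies, but the overall strategy is the same.
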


The next result is a consequence of Theorem \ref{main} and \cite[Theorem 7.1]{ij}.

\begin{corollary}\label{cor:lim}
Let $T$ be an EMR map for which the sequence $\big(\frac{\log n}{-\log(b_n-a_n)}\big)_n$ converges. Then 
\begin{equation*}
\dim_H \left( \left\{ x \in \Lambda : \lim_{n \to \infty} \frac{1}{n} \sum_{i=0}^{n-1} \log|T^i x| = \infty \right\} \right) =  \dim_B \left(\bigcup_{n=1}^{\infty} \left\{a_n, b_n	\right\} \right).
\end{equation*}
\end{corollary}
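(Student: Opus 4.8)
The plan is to recognize the set on the left as the set of points of $\Lambda$ with infinite Lyapunov exponent and then to chain together two results already at our disposal. By the chain rule the Birkhoff sum $\sum_{i=0}^{n-1}\log|T'(T^i x)|$ equals $\log|(T^n)'(x)|$, so the defining condition $\lim_{n\to\infty}\frac1n\sum_{i=0}^{n-1}\log|T'(T^i x)|=\infty$ says precisely that the Lyapunov exponent of $x$ is infinite. Hence by \cite[Theorem 7.1]{ij} the Hausdorff dimension of this set equals $s_\infty$, and everything reduces to showing $s_\infty=\dim_B(S)$ with $S=\bigcup_{n=1}^\infty\{a_n,b_n\}$.

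For this I would invoke Theorem \ref{main}, whose equality conclusion holds once the box dimension of $S$ is known to exist. Writing $\underline L,\overline L$ as in the proof of Theorem \ref{main}, the hypothesis that $\big(\frac{\log n}{-\log(b_n-a_n)}\big)_n$ converges is exactly the assertion $\underline L=\overline L$; by \cite[Corollary 3.8]{fa2} this equality is equivalent to the existence of $\dim_B(S)$. Thus the hypothesis simultaneously guarantees that $\dim_B(S)$ exists and places us in the case of Theorem \ref{main} where it yields the equality $s_\infty=\dim_B(S)$ rather than merely a lower bound.

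Combining $\dim_H=s_\infty$ with $s_\infty=\dim_B(S)$ then gives the stated identity. The argument is essentially bookkeeping, so I expect no substantive obstacle; the one point to get right is the matching of hypotheses, namely that convergence of the sequence $\big(\frac{\log n}{-\log(b_n-a_n)}\big)_n$ is precisely the condition $\underline L=\overline L$ under which both $\dim_B(S)$ exists and Theorem \ref{main} produces an equality.
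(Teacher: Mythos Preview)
Your proposal is correct and follows exactly the approach the paper indicates: the paper states (without further detail) that the corollary ``is a consequence of Theorem \ref{main} and \cite[Theorem 7.1]{ij}'', and your argument is precisely the unpacking of that sentence, including the correct matching of the hypothesis $\underline L=\overline L$ with the existence of $\dim_B(S)$ via \cite[Corollary 3.8]{fa2}. The only cosmetic point is that the displayed sum in the statement is a typo for $\sum_{i=0}^{n-1}\log|T'(T^i x)|$; you read it the intended way.
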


%

\section{Geodesic flow on a negatively curved manifold}  \label{sec:geo}

In this section we will relate the box dimension of points at the Gromov boundary of Hadamard manifolds with the  critical exponent of parabolic subgroups. As explained in the introduction the critical exponent of a parabolic subgroup is related to the entropy at infinity of the geodesic flow on a geometrically finite manifold; our results relate the entropy at infinity with dimension theory of points at the Gromov boundary. A good reference for the facts used in this section is \cite{bh}. 

Let $(X,g)$ be a pinched negatively curved Hadamard manifold. We will moreover assume that the sectional curvature of $X$ is bounded above by $-1$, in other words, that $X$ is a $Cat(-1)$ space. The main example of a $Cat(-1)$ space is hyperbolic space $\mathbb{H}^n$, where the sectional curvature is constant equal to $-1$. 

The Gromov boundary of $X$ is denoted by $\partial X$. The Gromov product of $\xi\in\partial X$ and $\xi'\in\partial X$ with respect to the point $x\in X$ is defined as 
$$(\xi|\xi')_x=\frac{B_\xi(x,z)+B_{\xi'}(x,z)}{2},$$
where $B_\xi(p,q)$ is the Busemann function and $z$ is a point in the geodesic connecting $\xi$ and $\xi'$. The Gromov product is independent of the choice of the point $z$.  From now on we fix a reference point $o\in X$ and we use the notation $(\xi|\xi'):=(\xi|\xi')_o$. Define $d_{\partial X}:\partial X\times \partial X\to \R_{\ge 0}$, by the formula
\begin{equation*}
  d_{\partial X} (\xi,\xi')=
 \begin{cases} 
 e^{-(\xi|\xi')}  & \text{ if }\xi\ne\xi', \\
      0  & \text{ if }\xi=\xi'.
\end{cases}
\end{equation*}
\begin{proposition}\cite{bou} The function $d_{\partial X}:\partial X\times \partial X\to \R_{\ge 0}$ is a metric on $\partial X$. \end{proposition}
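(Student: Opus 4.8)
The plan is to verify the three defining properties of a metric for $d_{\partial X}$: nonnegativity with nondegeneracy, symmetry, and the triangle inequality. Symmetry is immediate, since the Gromov product $(\xi|\xi')$ is manifestly symmetric in its two arguments (both the point $z$ on the geodesic and the sum $B_\xi(x,z)+B_{\xi'}(x,z)$ are unchanged under swapping $\xi$ and $\xi'$), whence $d_{\partial X}(\xi,\xi')=d_{\partial X}(\xi',\xi)$. For nonnegativity and nondegeneracy the only point to check is that $(\xi|\xi')<\infty$ whenever $\xi\ne\xi'$, so that $e^{-(\xi|\xi')}>0$. Taking $z$ to be the nearest-point projection of $o$ onto the bi-infinite geodesic joining $\xi$ and $\xi'$ (which exists and is unique in a Hadamard manifold), the Busemann expression for $(\xi|\xi')_o$ is finite; conversely $(\xi|\xi')=+\infty$ forces the rays to $\xi$ and $\xi'$ to be asymptotic, i.e.\ $\xi=\xi'$. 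Together with the convention at coinciding points this gives $d_{\partial X}(\xi,\xi')=0$ if and only if $\xi=\xi'$.

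The substance of the proposition is the triangle inequality
$$e^{-(\xi_1|\xi_3)_o}\le e^{-(\xi_1|\xi_2)_o}+e^{-(\xi_2|\xi_3)_o}$$
for three distinct boundary points. I would first rewrite the Gromov product in its limit form: approximating $\xi_i$ by points $x_i=\gamma_{\xi_i}(t)$ marching out along the geodesic rays from $o$, one has $(\xi_i|\xi_j)_o=\lim_{t\to\infty}\tfrac12\big(d(o,x_i)+d(o,x_j)-d(x_i,x_j)\big)$, which agrees with the Busemann definition. It then suffices to prove the corresponding inequality for the approximating interior points and pass to the limit. Note that the purely coarse Gromov-hyperbolic estimate $(\xi_1|\xi_3)\ge\min\{(\xi_1|\xi_2),(\xi_2|\xi_3)\}-\delta$ is \emph{not} enough here: to obtain an honest metric (additive constant exactly $1$, not merely a quasi-metric) one must use the sharp $\mathrm{Cat}(-1)$ comparison rather than mere hyperbolicity.

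The heart of the argument is therefore a comparison with the model space $\H^2$. First, in $\H^2$ one computes the visual quantity explicitly: taking $o$ to be the center of the disk model, a direct trigonometric calculation shows that $e^{-(\eta|\eta')_o}$ is a fixed constant times the chordal distance between $\eta,\eta'\in\partial\H^2=S^1$ (equivalently $\sin$ of half the visual angle), which is plainly a genuine metric and hence satisfies the triangle inequality. Then I would transfer this to $X$: the $\mathrm{Cat}(-1)$ condition, applied to the comparison triangles in $\H^2$ of the relevant configuration of approximating interior points, relates the Gromov products in $X$ to those of the comparison configuration in a way that transports the $\H^2$ triangle inequality to $X$; letting $x_i\to\xi_i$ yields the inequality on $\partial X$.

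The main obstacle is precisely this final comparison step. The difficulty is threefold: one must set up the comparison for configurations involving ideal vertices, which requires approximation by interior points and control of the limits; one must check that the $\mathrm{Cat}(-1)$ inequality translates into a relation between Gromov products with exactly the sign needed; and one must ensure the multiplicative constant stays equal to $1$, so that the output is a genuine metric and not merely a quasi-metric. This sharp comparison is exactly the content of Bourdon's analysis in \cite{bou}, and it is where the curvature hypothesis (sectional curvature $\le-1$) is essential.
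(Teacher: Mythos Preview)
The paper does not prove this proposition at all: it is stated with a citation to Bourdon \cite{bou} and left without proof, as a background fact about the visual metric on the boundary of a $\mathrm{Cat}(-1)$ space. There is therefore no proof in the paper to compare your proposal against.

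That said, your sketch is a faithful outline of the standard argument, which is indeed Bourdon's. Symmetry and nondegeneracy are routine, and you correctly identify that the entire content lies in the triangle inequality, that the coarse $\delta$-hyperbolic four-point inequality is insufficient (it only yields a quasi-metric), and that the sharp result requires the $\mathrm{Cat}(-1)$ comparison with $\H^2$ together with the explicit formula $e^{-(\eta|\eta')_o}=\sin\tfrac12\angle_o(\eta,\eta')$ on $\partial\H^2$. Your final paragraph accurately names the genuine technical work---controlling the comparison for ideal configurations and keeping the multiplicative constant equal to $1$---and correctly attributes it to \cite{bou}. So your proposal is not so much an independent proof as an informed summary of Bourdon's strategy, which is appropriate given that the paper itself treats the result as a citation.
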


The metric $d_{\partial X} $ was introduced by Bourdon in \cite{bou} and defines a canonical conformal structure on $\partial X$ (which is independent of the reference point $o$).

The isometry group of $(X,g)$ is denoted by $\text{Iso}(X)$. It is well known that the action of every element $g\in \text{Iso}(X)$  on $X$, extends to a homeomorphism of the Gromov boundary $\partial X$. An element $g\in \text{Iso}(X)$ is called \emph{hyperbolic} if its action on $\partial X$ has two fixed points.  An element $g\in \text{Iso}(X)$ is called \emph{parabolic} if its action on $\partial X$ has a unique fixed point. We say that a group $G\leqslant\text{Iso}(X)$ is  parabolic if there exists a unique point $w\in \partial X$ that is fixed by the action of every element in $G$. We say that $G\leqslant\text{Iso}(G)$ is \emph{elementary} if it is a parabolic subgroup or generated by a single hyperbolic element. We say that $G$ is \emph{non-elementary} if it is not elementary.

To a group of isometries $G\leqslant\text{Iso}(X)$ we can associate a non-negative number, the so-called critical exponent of $G$. A fundamental property of the critical exponent is the following: if $G$ acts free and properly discontinuous on $X$ and $G$ is non-elementary, then the  topological entropy of the geodesic flow on $X/G$ is equal to the critical exponent of $G$ (see \cite[Theorem 1]{op}). 

\begin{definition}
Let $G\leqslant\text{Iso}(X)$ be a group of isometries. The \emph{Poincar\'e series} of G is defined by
\begin{equation*}
\mathcal{P}(s)=\sum_{g\in G}e^{-sd(o,g o)}.
\end{equation*}
The \emph{critical exponent} $\delta_G$ is defined by
\begin{equation*}
\delta_{G}:= \inf \left\{	s \in \R: \mathcal{P}(s) < \infty	\right\}.
\end{equation*}
\end{definition}

We remark that the critical exponent of $G$ is independent of the reference point $o\in X$. Given a geodesic triangle with vertices $x$, $y$ and $z$ we associate a comparison hyperbolic triangle with vertices $A$, $B$ and $C$ in $\H^2$, such that $d_{\mathbb{H}^2}(A,B)=d(x,y)$, $d_{\mathbb{H}^2}(B,C)=d(y,z)$ and $d_{\mathbb{H}^2}(C,A)=d(z,x)$, where $d$ is the Riemannian distance on $X$ and $d_{\mathbb{H}^2}$ the Riemannian distance on $\mathbb{H}^2$. The angle at $x$ of the geodesic triangle $xyz$ is defined as the angle at $A$ of the hyperbolic triangle $ABC$ and it is denoted by $\angle_x (y,z)$. The following lemma follows directly from the hyperbolic law of cosines.

\begin{lemma}\label{geoine} Given $D>0$, there exists a constant $C=C(D)>0$, such that for every geodesic triangle with vertices $x, y, z$, and angle at $z$ bigger than $D$, then $$d(x,y)\ge d(z,x)+d(z,y)-C.$$
\end{lemma}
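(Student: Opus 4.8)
The plan is to reduce the statement entirely to a computation in $\H^2$. Since the angle $\angle_z(x,y)$ is by definition the corresponding angle of the comparison triangle $ABC$, and the comparison triangle has the same side lengths as $xyz$—that is $d_{\mathbb{H}^2}(A,B)=d(x,y)$, $d_{\mathbb{H}^2}(C,A)=d(z,x)$ and $d_{\mathbb{H}^2}(C,B)=d(z,y)$—the desired inequality $d(x,y)\ge d(z,x)+d(z,y)-C$ is literally the inequality $d_{\mathbb{H}^2}(A,B)\ge d_{\mathbb{H}^2}(C,A)+d_{\mathbb{H}^2}(C,B)-C$ for the triangle $ABC$. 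Thus it suffices to prove the lemma for a hyperbolic triangle in $\H^2$, and the $Cat(-1)$ hypothesis enters only through this definition of comparison angle.

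Writing $a=d_{\mathbb{H}^2}(C,B)$, $b=d_{\mathbb{H}^2}(C,A)$, $c=d_{\mathbb{H}^2}(A,B)$ and letting $\gamma$ denote the angle at $C$, I would invoke the hyperbolic law of cosines
$$\cosh c=\cosh a\cosh b-\sinh a\sinh b\cos\gamma.$$
The hypothesis $\gamma\ge D$ gives $\cos\gamma\le\cos D$, so that
$$\cosh c\ge\cosh a\cosh b-\sinh a\sinh b\cos D.$$
Using the identities $\cosh a\cosh b=\tfrac12(\cosh(a+b)+\cosh(a-b))$ and $\sinh a\sinh b=\tfrac12(\cosh(a+b)-\cosh(a-b))$, the right-hand side becomes
$$\frac{1-\cos D}{2}\cosh(a+b)+\frac{1+\cos D}{2}\cosh(a-b)\ge\frac{1-\cos D}{2}\cosh(a+b),$$
where the last step uses $\cosh(a-b)\ge 0$ together with $\cos D<1$ (valid since $D>0$).

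Finally I would convert this into the additive estimate by elementary bounds on $\cosh$: from $\cosh c\le e^{c}$ and $\cosh(a+b)\ge\tfrac12 e^{a+b}$ one obtains $e^{c}\ge\frac{1-\cos D}{4}\,e^{a+b}$, and taking logarithms yields $c\ge a+b-\log\!\big(\tfrac{4}{1-\cos D}\big)$. Hence $C=C(D):=\log\!\big(\tfrac{4}{1-\cos D}\big)$ works; it is positive and depends only on $D$, as required, and when $a+b-C<0$ the inequality holds trivially since $c\ge 0$. There is no serious obstacle here: the only care needed is in the algebra collapsing the law of cosines to a clean lower bound on $\cosh(a+b)$ and in checking that the resulting constant depends on $D$ alone, the entire geometric content being carried by the reduction to the comparison triangle in $\H^2$.
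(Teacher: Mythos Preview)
Your proof is correct and follows exactly the route the paper indicates: the paper simply asserts that the lemma ``follows directly from the hyperbolic law of cosines'', and you have carried out that computation in full, using the comparison-triangle definition of the angle to reduce to $\H^2$ and then extracting the explicit constant $C=\log\!\big(\tfrac{4}{1-\cos D}\big)$. There is nothing to add.
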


We will now verify that the same box dimension interpretation obtained for EMR maps in Theorem \ref{main} holds for the action of parabolic groups on the Gromov boundary $\partial X$. We will first deal with the case when $X$ is a surface. 
 
\begin{proposition}\label{parabolic} Let $(X,g)$ be a $Cat(-1)$ surface. Let  $p\in \text{\normalfont Iso}(X)$ be a parabolic isometry and $\xi\in \partial X$ a point that is not fixed by $p$.  Then $$\liminf_{k\to\infty}\frac{\log k}{  -\log d_{\partial X}(p^k\xi,p^{k+1}\xi)}\le \delta_{\langle p\rangle}\le \limsup_{k\to\infty}\frac{\log k}{  -\log d_{\partial X}(p^k\xi,p^{k+1}\xi)}.$$
If the sequence $\left(\frac{\log k}{  -\log d_{\partial X}(p^k\xi,p^{k+1}\xi)}\right)_k$ converges as $k$ goes to infinity, then 
$$\lim_{k\to\infty}\frac{\log k}{  -\log d_{\partial X}(p^k\xi,p^{k+1}\xi)}=\delta_{\langle p\rangle}.$$
\end{proposition}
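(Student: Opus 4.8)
The plan is to transport the elementary argument from the proof of Theorem \ref{main} to the boundary setting, after one genuinely geometric reduction. First I would unwind the definition of the visual metric: since $p^k\xi\ne p^{k+1}\xi$ we have $-\log d_{\partial X}(p^k\xi,p^{k+1}\xi)=(p^k\xi\mid p^{k+1}\xi)_o$, so the quantities in the statement are exactly $\tfrac{\log k}{(p^k\xi\mid p^{k+1}\xi)_o}$. On the other side, writing $\ell_k=d(o,p^ko)$, the Poincar\'e series of $\langle p\rangle$ is $\sum_{k\in\Z}e^{-s\ell_k}$, and since $\ell_{-k}=d(o,p^{-k}o)=d(p^ko,o)=\ell_k$ it converges if and only if $\sum_{k\ge1}e^{-s\ell_k}$ does; hence $\delta_{\langle p\rangle}=\inf\{s:\sum_{k\ge1}e^{-s\ell_k}<\infty\}$.

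The geometric heart, and the step I expect to be the main obstacle, is the uniform comparison
\[
(p^k\xi\mid p^{k+1}\xi)_o=\ell_k+O(1)\qquad(k\to\infty).
\]
To prove it I would invoke the standard $Cat(-1)$ estimate that the Gromov product $(\eta\mid\eta')_o$ agrees, up to a universal additive constant, with the distance $d(o,\gamma_{\eta\eta'})$ from $o$ to the bi-infinite geodesic joining $\eta$ and $\eta'$ (this is where Lemma \ref{geoine}, through the Busemann description of the Gromov product, does its work). By equivariance the geodesic $\gamma_k$ joining $p^k\xi$ and $p^{k+1}\xi$ is $p^k\gamma_0$, where $\gamma_0$ is the fixed geodesic joining $\xi$ and $p\xi$, so $d(o,\gamma_k)=d(p^{-k}o,\gamma_0)$. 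The upper bound $d(p^{-k}o,\gamma_0)\le\ell_k+d(o,\gamma_0)$ is just the triangle inequality. For the matching lower bound I would show the nearest-point projections $\pi_k$ of $p^{-k}o$ onto $\gamma_0$ stay in a bounded part of $\gamma_0$: since $p^{-k}o\to w$, the fixed point of $p$, and $w\notin\{\xi,p\xi\}=\partial\gamma_0$, the Gromov products $(p^{-k}o\mid\xi)_o$ and $(p^{-k}o\mid p\xi)_o$ stay bounded, so the projections cannot escape toward an endpoint of $\gamma_0$; Lemma \ref{geoine} converts this into a uniform bound $d(o,\pi_k)\le M$. Then $d(p^{-k}o,\gamma_0)=d(p^{-k}o,\pi_k)\ge d(o,p^{-k}o)-d(o,\pi_k)\ge\ell_k-M$, which closes the estimate.

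With the comparison established, $e^{-\ell_k}\asymp d_{\partial X}(p^k\xi,p^{k+1}\xi)$ uniformly in $k$, so $\sum_{k\ge1}e^{-s\ell_k}$ and $\sum_{k\ge1}d_{\partial X}(p^k\xi,p^{k+1}\xi)^s$ converge for precisely the same $s$, whence $\delta_{\langle p\rangle}=\inf\{s:\sum_{k\ge1}d_{\partial X}(p^k\xi,p^{k+1}\xi)^s<\infty\}$. From here the argument is the one in the proof of Theorem \ref{main} with $\delta_{\langle p\rangle}$ in place of $s_\infty$ and $d_{\partial X}(p^k\xi,p^{k+1}\xi)$ in place of $b_n-a_n$: setting $\underline\ell=\liminf_k\tfrac{\log k}{-\log d_{\partial X}(p^k\xi,p^{k+1}\xi)}$ and $\overline\ell=\limsup_k\tfrac{\log k}{-\log d_{\partial X}(p^k\xi,p^{k+1}\xi)}$, the inequalities $d_{\partial X}(p^k\xi,p^{k+1}\xi)^{\overline\ell+\epsilon}<\tfrac1k<d_{\partial X}(p^k\xi,p^{k+1}\xi)^{\underline\ell-\epsilon}$ for large $k$ let me sandwich the series between multiples of $\sum k^{-r}$ and conclude $\underline\ell\le\delta_{\langle p\rangle}\le\overline\ell$ by letting $r\to1^\pm$ and $\epsilon\to0$. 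The convergence statement is then immediate, since $\underline\ell=\overline\ell$ forces both to equal $\delta_{\langle p\rangle}$.
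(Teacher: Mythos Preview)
Your proof is correct and follows essentially the same route as the paper: both establish the key comparison $(p^k\xi\mid p^{k+1}\xi)_o=d(o,p^ko)+O(1)$ by equivariance together with a thin-triangle/angle argument hinging on Lemma \ref{geoine}, and then reduce to the elementary series criterion $\liminf_k\tfrac{\log k}{\ell_k}\le\delta_{\langle p\rangle}\le\limsup_k\tfrac{\log k}{\ell_k}$. The only differences are cosmetic: the paper works directly with the Busemann-function formula for the Gromov product to get the upper and lower bounds, whereas you quote the standard Gromov-product\,/\,distance-to-geodesic comparison and argue via bounded nearest-point projections; and the paper isolates the series step as an inline lemma rather than pointing back to the proof of Theorem \ref{main}.
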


\begin{proof}
We denote by $\eta$  the fixed point of $p$.  Choose a point $z$ in the geodesic connecting $\xi $ and $p\xi $. Observe that $p^kz$ belongs to the geodesic connecting $p^k\xi $ and $p^{k+1}\xi $. Note that 
\begin{equation*}
(p^k\xi |p^{k+1}\xi )=\frac{1}{2} \left(B_{p^k\xi }(o,p^kz)+B_{p^{k+1}\xi }(o,p^kz) \right)=\frac{1}{2} \left(B_{\xi }(p^{-k}o,z)+B_{p\xi }(p^{-k}o,z) \right).
\end{equation*}
Since $B_q(x,y)\le d(x,y)$, for all $q\in\partial X$ and $x,y\in X$ we conclude that 
\begin{align}\label{ine:1} (p^k\xi |p^{k+1}\xi )\le d(p^{-k}o,z)\le d(p^{-k}o,o)+d(o,z)= d(o,p^ko)+d(o,z).\end{align}
Let $\alpha(t)$ be a parametrization of the geodesic ray starting at $z$ and converging to $\xi $. If $|k|$ is sufficienly large, then $p^{-k}o$ will be  in a small neighborhood of  $\eta$ in $X\cup \partial X$.  In particular if $|k|$ and $t$ are sufficiently large, then the angle at $z$ of the geodesic triangle with vertices $z$, $p^{-k}o$, and $\alpha(t)$ is uniformly bounded below (it will be close to  the angle at $z$ between the geodesic rays $[z,\xi )$ and $[z,\eta)$). It follows from Lemma \ref{geoine}  that for sufficiently large values of  $t$ and $|k|$ we have
$$d(\alpha(t),p^{-k}o)-d(\alpha(t),z)\ge d(p^{-k}o,z)-C,$$
for some positive constant $C=C(\xi, \eta)$. By definition of the Busemann function we obtain that 
\begin{align*}
B_\xi(p^{-k}o,z)=\lim_{t\to\infty}  d(\alpha(t),p^{-k}o)-d(\alpha(t),z)&\ge d(p^{-k}o,z)-C\\
&\ge d(p^{-k}o,o)-d(o,z)-C
\\&=d(o,p^ko)-d(o,z)-C. \end{align*}
An analogous argument gives that there exists a constant $C'>0$ (independent of $k$) such that for $|k|$ sufficiently large we have 
$$B_{p\xi}(p^{-k}o,z)\ge d(p^{-k}o,z)-C'\ge d(o,p^ko)-d(o,z)-C'.$$
We conclude that for $|k|$ sufficiently large we have 
\begin{align}\label{ine:2}(p^k\xi|p^{k+1}\xi)\ge d(o,p^ko)-c,\end{align}
for some constant $c>0$ independent of $k$. In order to conclude the proof we will need the following fact. 

\begin{lemma} The following inequality holds 
\begin{align}\label{ine:crit}\liminf_{k\to \infty} \frac{\log k}{d(o,p^ko)} \le \delta_{\langle p\rangle}\le\limsup_{k\to \infty} \frac{\log k}{d(o,p^ko)}.\end{align}
If the sequence $\left(\frac{\log k}{d(o,p^k o)} \right)_k$ converges as $k$ goes to infinity, then 
$$\delta_{\langle p\rangle}=\lim_{k\to \infty} \frac{\log k}{d(o,p^ko)}.$$
\end{lemma}

\begin{proof} Observe that if $\limsup_{n\to\infty}\frac{\log n}{\log a_n^{-1}}<1$, then the series $\sum_n a_n$ converges. Applying this fact to $a_n=e^{-sd(o,p^n o)}$ we obtain that if $$\limsup_{n\to\infty}\frac{\log n}{d(o,p^n o)}<s,$$ then $\mathcal{P}(s)$ converges. This immediately implies that $\delta_{\langle p\rangle}\le \limsup_{n\to\infty}\frac{\log n}{d(o,p^n o)}$. Similarly, if 
$\liminf_{n\to\infty}\frac{\log n}{\log a_n^{-1}}>1$, then the series $\sum a_n$ diverges. Applying this to $a_n=e^{-sd(o,p^n o)}$ we obtain that $$\delta_{\langle p\rangle}\ge \liminf_{n\to\infty}\frac{\log n}{d(o,p^n o)}.$$
\end{proof}

Inequalities (\ref{ine:1}) and (\ref{ine:2}) imply that in  inequality (\ref{ine:crit}) we can replace $d(o,p^k o)$ by $-\log d_{\partial X}(p^k\xi, p^{k+1}\xi)=(p^k\xi|p^{k+1}\xi)$.
\end{proof}

 If $(X,g)$ is the hyperbolic disk, then $d_{\partial X}$ is related to the angle at $o$ between the geodesic rays $[o,\xi)$ and $[o,\xi')$. For now on we assume that $o$ is the origin of the hyperbolic disk. Bourdon \cite{bou}  proved  that $$d_{\partial \mathbb{H}}(\xi,\xi')=\sin\frac{1}{2}\angle_o(\xi,\xi'),$$
where $\angle_o(\xi,\xi')$ is the angle (using radians) between the (straight) rays $[o,\xi)$ and $[o,\xi')$. The euclidean metric on $\R^2$ induces a metric on $\partial \mathbb{H}$. We denote such metric by $d_1$, and it is given by the formula $$d_1(\xi,\xi')=\angle_o (\xi,\xi').$$ 

\begin{corollary}\label{cor:dim2} Let $\xi\in \partial \mathbb{H}$ and $p\in \text{\normalfont Iso}(\H^2)$ a parabolic isometry.  Assume that $\xi$ is not the fixed point of $p$.  Then
$$\delta_{\langle p\rangle}= \dim_B \left( \bigcup_{k\in \Z}p^k\xi  \right)=\frac{1}{2},$$
where the box dimension is computed using the spherical metric on $\partial \H^2$.
\end{corollary}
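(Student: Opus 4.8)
The plan is to read off both equalities from Proposition \ref{parabolic} once we supply two facts special to $\H^2$: the exact growth rate of $d(o,p^k o)$, and a bi-Lipschitz comparison between the Bourdon, angular and spherical metrics near the parabolic fixed point $\eta$. First I would compute $\delta_{\langle p\rangle}$. Working in the upper half-plane model with $\eta=\infty$, $o=i$ and $p(z)=z+1$, one has $p^k o=i+k$ and $\cosh d(o,p^k o)=1+k^2/2$, so $d(o,p^k o)=2\log k+O(1)$ as $k\to\infty$. Hence $\tfrac{\log k}{d(o,p^k o)}\to\tfrac12$; in particular this sequence converges, so inequality (\ref{ine:crit}) (the Lemma in the proof of Proposition \ref{parabolic}) gives $\delta_{\langle p\rangle}=\tfrac12$, recovering the classical value for a rank-one parabolic in $\H^2$.

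Next I would record the size of the consecutive gaps of the orbit on the boundary. Inequalities (\ref{ine:1}) and (\ref{ine:2}) give $(p^k\xi|p^{k+1}\xi)=d(o,p^k o)+O(1)=2\log k+O(1)$, so $-\log d_{\partial X}(p^k\xi,p^{k+1}\xi)=2\log k+O(1)$ and the gap $d_{\partial X}(p^k\xi,p^{k+1}\xi)$ decays comparably to $k^{-2}$; the same holds as $k\to-\infty$, since $d(o,p^{-k}o)=d(o,p^k o)$.

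To pass to the box dimension in the spherical metric, I would note that $\bigcup_{k\in\Z}p^k\xi$ accumulates only at $\eta$ (both tails converge to $\eta$ from the two sides of the circle), so its box dimension is determined by the geometry in an arbitrarily small neighborhood of $\eta$. There all relevant angles tend to $0$, and Bourdon's identity $d_{\partial\H}(\xi,\xi')=\sin\tfrac12\angle_o(\xi,\xi')$, together with $d_1=\angle_o$ and the fact that the spherical metric agrees with $d_1$ up to a bounded factor, shows that $d_{\partial X}$, $d_1$ and the spherical metric are bi-Lipschitz equivalent near $\eta$; since box dimension is a bi-Lipschitz invariant, it is the same for all three. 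The orbit points are cyclically ordered with $p^k\xi$ adjacent to $p^{k+1}\xi$, so the consecutive spacings are precisely the gaps estimated above. Applying \cite[Propositions 3.6 and 3.7]{fa2} exactly as in the proof of Theorem \ref{main} (with $b_n-a_n$ replaced by $d_{\partial X}(p^k\xi,p^{k+1}\xi)$ and $n$ by $|k|$), the $\liminf$ and $\limsup$ of $\tfrac{\log|k|}{-\log d_{\partial X}(p^k\xi,p^{k+1}\xi)}$ both equal $\tfrac12$, so the box dimension exists and equals $\tfrac12$. Combined with the first step this yields $\delta_{\langle p\rangle}=\dim_B\big(\bigcup_{k\in\Z}p^k\xi\big)=\tfrac12$.

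The step I expect to require the most care is this last transfer to Falconer's estimate: that result is phrased for the monotone gap sequence of a partition of a bounded interval, whereas here the orbit is bi-infinite and accumulates at $\eta$ from both sides of the circle. Making it rigorous means localizing near $\eta$, splitting the orbit into its two monotone tails, reordering the gaps by decreasing size (merging the two tails changes $\log n$ only by an additive $O(1)$, which does not affect the $\liminf$ and $\limsup$), and confirming that $\eta$ is the sole accumulation point so that the covering numbers are genuinely governed by the consecutive gaps. The remaining inputs — the distance asymptotic and the metric comparison — are routine.
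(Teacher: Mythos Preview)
Your proposal is correct and follows essentially the same approach as the paper: both compute $d(o,p^k o)=2\log k+O(1)$ to obtain $\delta_{\langle p\rangle}=\tfrac12$ via inequality~(\ref{ine:crit}), compare $d_{\partial X}$ with $d_1$ using $\sin x\sim x$ as $x\to 0$, and then read off the box dimension from the gap asymptotics. The paper's proof is terser on the final step --- it simply writes ``since the metric $d_1$ is equivalent to the flat metric on $\R$ we conclude the desired result'' --- whereas you spell out the Falconer argument and the bi-infinite-orbit subtlety explicitly.
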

\begin{proof}
Since $\lim_{x\to0}\frac{\sin x}{x}=1$, and $\lim_{k\to \infty}\angle_o(p^k\xi,p^{k+1}\xi)=0$, one can easily check that in  Proposition \ref{parabolic} it is possible to  replace $d_{\partial X}$ by $d_1$. In the hyperbolic disk it is known that the sequence $(d(o,p^no)-2\log n)_n$ is bounded (see proof of Lemma \ref{lem:k/2}). In particular we know that $\lim_{k\to\infty}\frac{\log k}{d(o,p^ko)}=\frac{1}{2}$. This implies that equality holds in inequality (\ref{ine:crit}).  Since the metric $d_1$ is equivalent to the flat metric on $\R$ we conclude the desired result.
\end{proof}

As mentioned in the introduction, one of the main results in this paper is the generalization of Corollary \ref{cor:dim2} to higher dimensions. From now on we will always assume that $\Gamma\leqslant \text{Iso}(\H^n)$ is a discrete, torsion free subgroup of isometries. Fix a reference point $o\in \H^n$. We define the \emph{limit set of} $\Gamma$ as the set of accumulation points of $\Gamma.o$ in $\H^n\cup \partial \H^n$, and  denoted it by $\Lambda(\Gamma)$. The limit set of $\Gamma$ is a subset of $\partial \H^n$ and it is independent of the base point $o$. If $\Gamma$ is non-elementary we have a very nice characterization of $\Lambda(\Gamma)$: it is the minimal $\Gamma$-invariant closed subset of $\partial \H^n$  (for instance see \cite{bh}). 

The non-wandering set of the geodesic flow on $\H^n/\Gamma$ is denoted by $\Omega\subset T^1(\H^n/\Gamma)$. We say that $\Gamma\leqslant\text{Iso}(\H^n)$ is \emph{geometrically finite} if an $\epsilon$-neighborhood of $\Omega$ has finite Liouville measure. Several equivalent definitions of geometrically finiteness are given in \cite{bowd2}.  We will need the following result of Stratmann and Urba\'nski (see \cite[Theorem 3]{su}). 
\begin{theorem}\label{su} Assume that $\Gamma\leqslant\text{\normalfont Iso}(\H^n)$ is geometrically finite. Then $$\delta_\Gamma=\dim_H\Lambda(\Gamma)=\dim_B \Lambda(\Gamma),$$
where the box and Hausdorff dimension are computed using the spherical metric on $\partial \H^n$. 
\end{theorem}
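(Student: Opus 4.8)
The plan is to deduce the theorem from Sullivan's identity $\delta_\Gamma=\dim_H\Lambda(\Gamma)$ (proved in \cite{s} for geometrically finite groups) together with the general inequalities $\dim_H\le\underline{\dim}_B\le\overline{\dim}_B$, which hold for every set. Since these give one half of the chain automatically, it suffices to establish the single reverse estimate
\begin{equation*}
\overline{\dim}_B\Lambda(\Gamma)\le\delta_\Gamma;
\end{equation*}
this squeezes all four quantities together, so that in particular $\dim_B\Lambda(\Gamma)$ exists and equals $\delta_\Gamma$. Everything therefore reduces to producing, for each small $t>0$, an economical cover of $\Lambda(\Gamma)$ by $\lesssim t^{-(\delta_\Gamma+\epsilon)}$ balls of radius $t$.

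The main tool I would use is the Patterson--Sullivan measure. For the non-elementary group $\Gamma$ one constructs a $\delta_\Gamma$-conformal probability measure $\mu$ supported on $\Lambda(\Gamma)$, and records two facts about it. First, Sullivan's shadow lemma gives $\mu(B(\xi,t))\asymp t^{\delta_\Gamma}$ whenever the ray $[o,\xi)$ stays in the thick part at the scale $t$, that is, for conical limit points captured at depth $\log(1/t)$. Second, and crucially in the presence of cusps, one uses the global measure formula of Stratmann and Velani: uniformly in $\xi\in\Lambda(\Gamma)$,
\begin{equation*}
\mu\bigl(B(\xi,t)\bigr)\asymp t^{\delta_\Gamma}\,e^{\rho(\xi,t)\,(k(\xi,t)-\delta_\Gamma)},
\end{equation*}
where $\rho(\xi,t)\ge0$ measures how deep the ray $[o,\xi)$ has penetrated a cusp at the horospherical scale associated with $t$, and $k(\xi,t)$ is the rank of that cusp. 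Evaluated at a bounded parabolic fixed point $p$ of rank $k$ this recovers Sullivan's rank estimate $\mu(B(p,t))\asymp t^{2\delta_\Gamma-k}$, which is where geometric finiteness enters: it guarantees finitely many cusp orbits, a clean horoball structure, and the inequality $\delta_\Gamma>k/2$ that keeps these exponents positive.

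With these estimates in hand I would split $\Lambda(\Gamma)$ into its radial part and small neighbourhoods of the finitely many orbits of parabolic fixed points. On the radial part $\rho\equiv0$, so $\mu(B(\xi,t))\asymp t^{\delta_\Gamma}$; feeding this into the packing description of box dimension (disjoint balls of radius $t$ centred in $\Lambda(\Gamma)$ each carry $\mu$-mass $\gtrsim t^{\delta_\Gamma}$ and total mass $\le1$) bounds their number by $\lesssim t^{-\delta_\Gamma}$. Near a rank-$k$ cusp the packing bound degrades once $\delta_\Gamma>k$, because there balls are $\mu$-lighter than $t^{\delta_\Gamma}$; there I would instead cover directly, using that the local limit set is modelled on a rank-$k$ horoball packing and counting the shadows of diameter $\ge t$ via the lattice-point count inside the cusp. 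The uniform input making this count work is the orbit estimate $\#\{\gamma\in\Gamma:d(o,\gamma o)\le R\}\le C_\epsilon\,e^{(\delta_\Gamma+\epsilon)R}$, which is immediate from convergence of the Poincar\'e series $\mathcal{P}(s)$ at $s=\delta_\Gamma+\epsilon$. Assembling the two regions gives $N_t(\Lambda(\Gamma))\le C_\epsilon\,t^{-(\delta_\Gamma+\epsilon)}$ for every $\epsilon>0$, hence $\overline{\dim}_B\Lambda(\Gamma)\le\delta_\Gamma$.

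The hard part is the cusp analysis. The subtlety is precisely the non-uniformity of $\mu$ at parabolic points: because the measure decays like $t^{2\delta_\Gamma-k}$ rather than $t^{\delta_\Gamma}$ there, the single clean packing argument that settles the conical set cannot see the cusps correctly when $\delta_\Gamma>k$, and one must trade the measure-theoretic count for a genuinely geometric horoball count controlled by the rank-$k$ lattice. Establishing the global measure formula itself---equivalently, pinning down the orbit-counting asymptotics inside a cusp---is the real technical content, and it is exactly here that the geometric finiteness hypothesis is indispensable: without it the limit set may have strictly larger box than Hausdorff dimension, so no such equality can hold.
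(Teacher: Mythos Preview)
The paper does not prove this statement at all: Theorem~\ref{su} is quoted from the literature as \cite[Theorem 3]{su} (Stratmann--Urba\'nski) and is used as a black box in the proof of Theorem~\ref{hdim}. There is therefore no ``paper's own proof'' to compare against.

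That said, your sketch is a faithful outline of how the result is actually established in \cite{su} and the companion work of Stratmann--Velani: reduce to $\overline{\dim}_B\Lambda(\Gamma)\le\delta_\Gamma$ via Sullivan's identity and the trivial inequalities, then control covering numbers through the Patterson--Sullivan measure and the global measure formula, with the cusp regions handled by the rank estimate $\mu(B(p,t))\asymp t^{2\delta_\Gamma-k}$. Your identification of the hard part is also correct: the non-uniformity of $\mu$ near parabolic points is exactly what forces the separate horoball count, and this is where geometric finiteness (finitely many cusp orbits, the Beardon inequality $\delta_\Gamma>k/2$) is genuinely used. One small caution: the global measure formula already encodes the cusp geometry well enough that one does not need to abandon the measure entirely in the parabolic region---the argument in \cite{su} still runs through $\mu$, just with the corrected exponent---so your ``trade the measure-theoretic count for a genuinely geometric horoball count'' slightly overstates the bifurcation. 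But as a high-level plan your proposal is sound and matches the cited source; it is simply not something the present paper undertakes.
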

We remark that for hyperbolic geometrically finite manifolds the equality $\delta_\Gamma=\dim_H\Lambda(\Gamma)$, was proved by Sullivan (see \cite[Theorem 25]{s}). This result was latter generalized by Bishop and Jones to cover all hyperbolic manifolds (see \cite[Theorem 2.1]{bj}). More precisely, they proved that  $\delta_\Gamma=\dim_H\Lambda_{rad}(\Gamma)$, where $\Lambda_{rad}(\Gamma)$ is the radial limit set of $\Gamma$ (for precise definitions we refer the reader to \cite{bj}). If $G$ is a group we define $G^*$ to be $G\setminus\{id\}$. We will need the following definition. 
\begin{definition} Let $F_1$ and $F_2$ be discrete, torsion free subgroups of $\text{Iso}(\H^n)$. We say that $F_1$ and $F_2$ are in Schottky position if there exist disjoint closed subsets $U_{F_1}$ and $U_{F_2}$ of $\partial\H^n$ such that $F_1^*(\partial \H^n\setminus U_{F_1})\subset U_{F_1}$ and $F_2^*(\partial \H^n\setminus U_{F_2})\subset U_{F_2}$. 
\end{definition}

 We now state one of the main results of this section.

\begin{theorem}\label{hdim} Let $\cP\leqslant\text{\normalfont Iso}(\H^n)$ be a parabolic subgroup and $\xi\in \partial\H^n$ a point not fixed by $\cP$. Then $$\delta_\cP=\dim_B \bigg(\bigcup_{p\in \cP} p\xi\bigg),$$
where the box dimension is computed using the spherical metric on $\partial \H^n$. 
\end{theorem}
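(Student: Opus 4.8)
The plan is to pass to the upper half-space model of $\H^n$ and place the parabolic fixed point $w$ of $\cP$ at $\infty$. Since $\cP$ consists of parabolic isometries fixing $w$, each $p\in\cP$ then acts on $\partial\H^n\setminus\{\infty\}=\R^{n-1}$ as a Euclidean isometry $p(x)=A_px+v_p$ with $A_p\in O(n-1)$, extending to $\H^n$ by $(x,t)\mapsto(A_px+v_p,t)$. As $\xi$ is not fixed by $\cP$ we have $\xi\neq\infty$, so $\xi\in\R^{n-1}$ and the orbit $\cO:=\cP\xi$ is a locally finite subset of $\R^{n-1}$ whose only accumulation point in $\partial\H^n$ is $\infty$. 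Taking $o=(0,1)$, the distance formula in the upper half-space model gives $\cosh d(o,po)=1+\tfrac12|v_p|^2$, hence
\begin{equation*}
d(o,po)=2\log|v_p|+O(1)\qquad(|v_p|\to\infty).
\end{equation*}
By Bieberbach's theorem $\cP$ contains a free abelian translation subgroup of finite index isomorphic to $\Z^k$; call $k$ the \emph{rank} of $\cP$. I will show that both sides of the asserted equality equal $k/2$.

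For the critical exponent, the displacement estimate turns the Poincar\'e series into $\mathcal{P}(s)=\sum_{p\in\cP}e^{-sd(o,po)}\asymp\sum_{p\in\cP}(1+|v_p|)^{-2s}$. Because the translation vectors of the finite-index subgroup $\Z^k$ form a rank-$k$ lattice and $\cP$ is a finite extension of it, one has $\#\{p\in\cP:|v_p|\le T\}\asymp T^k$. Comparing the series with the integral $\int_1^\infty T^{-2s}\,d(T^k)$ then shows that $\mathcal{P}(s)$ converges precisely when $s>k/2$, so $\delta_\cP=k/2$ (cf.\ Lemma~\ref{lem:k/2} for the cyclic case).

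For the box dimension I work directly with the spherical (chordal) metric $\rho$ on $\partial\H^n\cong S^{n-1}$, for which $\rho(x,y)\asymp|x-y|\big((1+|x|^2)(1+|y|^2)\big)^{-1/2}$ and $\rho(x,\infty)\asymp(1+|x|)^{-1}$ for $x,y\in\R^{n-1}$. I estimate the covering number $N_\delta(\cO)$ of $\overline{\cO}=\cO\cup\{\infty\}$ by a multiscale argument. A single ball of radius $\delta$ about $\infty$ absorbs every orbit point with $|v_p|\gtrsim\delta^{-1}$; the remaining points satisfy $|v_p|\lesssim\delta^{-1}$, and an orbit point at Euclidean radius $T$ has spherical separation $\asymp T^{-2}$ from its lattice neighbours. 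Counting lattice points dyadically in $T$, one finds that the dominant contribution comes from radius $T\asymp\delta^{-1/2}$, where a region of spherical size $\asymp T^{-1}$ must be covered at spacing $\asymp T^{-2}$; this yields $N_\delta(\cO)\asymp\delta^{-k/2}$ with constants uniform as $\delta\to0$. Hence the box dimension exists and $\dim_B(\cO)=k/2$.

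Combining the two computations gives $\delta_\cP=k/2=\dim_B\big(\bigcup_{p\in\cP}p\xi\big)$, as claimed; in particular the box dimension genuinely exists, in contrast with the general $Cat(-1)$ situation of Proposition~\ref{parabolic}. I expect the technical heart to be the box-dimension estimate: one must control both the upper and lower bounds for $N_\delta(\cO)$ with $\delta$-uniform constants, correctly match the Euclidean scale $T$ with the spherical scale $\delta$ across the range $\delta^{-1/2}\lesssim T\lesssim\delta^{-1}$, and use uniform discreteness of the orbit (valid since $\Gamma$, hence $\cP$, is torsion free) together with the rotational parts $A_p$ to justify the lattice-point count $\#\{p:|v_p|\le T\}\asymp T^k$ and the lower separation bound.
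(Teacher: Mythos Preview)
Your outline is correct and lands on the same identity $\delta_\cP=k/2=\dim_B(\cP\xi)$, but your route to the \emph{upper} box-dimension bound is genuinely different from the paper's. The paper never estimates $N_\delta(\cO)$ from above by a direct count: instead it embeds $\cP\xi$ into the limit set of an auxiliary geometrically finite group $\Gamma_k=\cP*\langle h^k\rangle$, where $h$ is a hyperbolic isometry fixing $\xi$ chosen so that $\langle h\rangle$ and $\cP$ are in Schottky position; then it invokes the Stratmann--Urba\'nski theorem $\dim_B\Lambda(\Gamma_k)=\delta_{\Gamma_k}$ and lets $k\to\infty$, using $\delta_{\Gamma_k}\to\delta_\cP$ from \cite{irv}. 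For the lower bound and the computation $\delta_\cP=k/2$ the paper proceeds essentially as you do, via Bieberbach and the finite-index translation lattice $\cP_0\cong\Z^k$, though it phrases the separation estimate through the Gromov product and Busemann functions rather than the chordal formula. Your direct dyadic covering argument is more elementary and self-contained---it avoids the Schottky construction and both external black boxes---whereas the paper's upper-bound argument never uses the explicit Euclidean structure of the parabolic action and is therefore more portable to settings (variable curvature, general $\mathrm{CAT}(-1)$ spaces) where no upper-half-space model or Bieberbach normal form is available. One point to make precise when you write this out: since $\cP\xi=\bigcup_i\cP_0(g_i\xi)$ is only a finite union of translates of the rank-$k$ lattice, either run your covering and packing counts on a single $\cP_0$-orbit and invoke finite stability of box dimension (this is what the paper does), or check explicitly that the rotational parts $A_p$ do not disturb the uniform Euclidean separation you need for the packing side of $N_\delta\asymp\delta^{-k/2}$.
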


\begin{remark}
 Theorem \ref{hdim} does not immediately follow from Theorem \ref{su} since the group $\cP$ is not geometrically finite. Moreover, the group $\cP$ is elementary and $\Lambda(\cP)=\{\eta\}$, where $\eta$ is the point fixed by $\cP$. Nevertheless, using box dimension we can recover the critical exponent from the orbit of $\xi$ under $\cP$. 
\end{remark}

\begin{proof} We first prove the inequality $\overline{\dim}_B \cP\xi\le \delta_\cP$. There exists a  hyperbolic isometry $h$ which fixes $\xi$ and such that  $\langle h\rangle$ and $\cP$ are in Schottky position. Indeed, let $\cV_\cP$ be a fundamental domain of the action of $\cP$ on $\partial \H^n$ such that $\xi\in \text{int }\cV_\cP$, and define $\cU_\cP=\partial\H^n\setminus \cV_\cP$. Observe that for every $p\in \cP\setminus\{id\}$ we have that $p(\partial \H^n\setminus \cU_\cP)\subset \cU_\cP$. Now choose a hyperbolic isometry $w$  such that $w(\xi)=\xi$, and such that the other fixed point of $w$, say $\xi_0$, also belongs to $\text{int }\cV_\cP$. If one chooses $k$ large enough, then $h=w^k$ will satisfy the properties stated above: if $k$ is large enough, then we can find a neighborhood $\cU_h$ of $\{\xi,\xi_0\}$ such that $\cU_h\subset \cV_\cP=\partial \H^n\setminus \cU_\cP$ and $h^s(\partial\H^n \setminus\cU_h)\subset \cU_h$, for every $s\ne 0$. This implies that $\langle h\rangle$ and $\cP$ are in Schottky position.

Define $\Gamma_k=\cP \ast \langle h^k\rangle$. We can now use \cite[Proposition 5.3]{irv} to conclude that $\lim_{k\to\infty}\delta_{\Gamma_k}=\delta_\cP$. Moreover, the manifold $\H^n/\Gamma_k$ is geometrically finite (see \cite{dp} for a proof when $\cP$ has rank one, but the same argument applies to $\Gamma_k$). By construction $\xi\in \Lambda(\Gamma_k)$, therefore $\Gamma_k \xi\subset \Lambda(\Gamma_k)$; in particular $\overline{\Gamma_k \xi}\subset \Lambda(\Gamma_k)$. Observe that $\overline{\Gamma_k \xi}$ is a closed $\Gamma_k$-invariant subset of $\partial \H^n$. Since  $\Gamma_k$ is non-elementary we conclude that $\overline{\Gamma_k \xi}=\Lambda(\Gamma_k)$. We obtained that  $\cP\xi\subset \Gamma_k\xi\subset \overline{\Gamma_k \xi}=\Lambda(\Gamma_k)$. Using Theorem \ref{su} and this inclusion we  conclude that 
$$\overline{\dim}_B \cP\xi\le \overline{\dim}_B\Lambda(\Gamma_k)=\dim_B\Lambda(\Gamma_k)=\delta_{\Gamma_k},$$
for every $k\in \N$. Therefore $$\overline{\dim}_B\cP\xi\le \lim_{k\to\infty}\delta_{\Gamma_k}=\delta_\cP.$$
 
We  now prove the inequality $\underline{\dim}_B\cP \xi \ge \delta_\cP$. It will be convenient to consider the upper half-space model of hyperbolic space. We identify $\H^n$ with $\{(x_1,...,x_n)\in\R^{n}:x_n>0\}$, and the fixed point of $\cP$ with the point of $\partial \H^n$ whose $x_n$--coordinate is $\infty$ (the rest of $\partial \H^n$ is the plane $x_n=0$). Our reference point will be $o=(0,...,0,1)$ (this point corresponds to the origin in the ball model). Via the action of $\cP$ on the horosphere $x_n=1$, we can identify $\cP$ with a subgroup of $\text{Aff}(\R^{n-1})$, where $\text{Aff}(\R^{n-1})$ is the space of affine transformations of $\R^{n-1}$. It follows from Bieberbach's theorem that there exists a finite index subgroup $\cP_0$ of $\cP$ such that $\cP_0$ acts  on $x_n=1$ as a group of translations (for instance see \cite[Theorem 1.1]{bowd}). In particular $\cP_0\cong \Z^k$, for some $k\in \{1,...,n-1\}$. The number $k$ is called the rank of $\cP$. It is well known that $\delta_\cP=\delta_{\cP_0}=\frac{k}{2}$ (see for instance \cite[Section 3]{dop}). There exists a sequence $\{g_1,...,g_m\}\subset \cP$ such that $\cP=\bigcup_{i=1}^m \cP_0g_i$, in particular $\cP\xi=\bigcup_{i=1}^m \cP_0g_i\xi$. We will prove that 
\begin{align}\label{ineB} \underline{\dim}_B\cP_0\eta \ge \frac{k}{2},\end{align}
for every $\eta\in\partial \H^n$.  Assume that inequality (\ref{ineB}) holds. As mentioned before $\delta_{\cP_0}=\frac{k}{2}$. Combining inequality (\ref{ineB}) with the inequality $\overline{\dim}_B\cP_0\eta\le \delta_{\cP_0}$, we obtain that $\dim_B\cP_0\eta=\frac{k}{2}=\delta_{\cP_0}=\delta_\cP$, for every $\eta\in \partial\H^n$. Since the box dimension is finitely additive we can conclude that $\dim_B\cP\xi=\frac{k}{2}=\delta_\cP$ (recall that   $\cP\xi=\bigcup_{i=1}^m \cP_0g_i\xi$). 

We now prove inequality (\ref{ineB}). We will identify the group $\cP_0$ with $\Z^k$, and use the notation $n.\xi$ to denote the translate of $\xi$ under $n\in\Z^k$. The standard generators of $\Z^k$ will be denoted by $\{e_1,...,e_k\}$.  Denote by $\rho$ the fixed point of $\cP_0$. 
Since $\lim_{k\to\infty}k.\eta=\rho$, we can conclude that there exists $C>0$ such that the geodesic connecting $\eta$ and $k.\eta$ is at distance at most $C$ from $o$. We choose $z_k\in \H^n$ in the geodesic connecting $\eta$ and $k.\eta$ such that $d(o,z_k)\le C$.  Observe that $N.z_k$ belongs to the geodesic connecting $N.\eta$ and $(N+k).\eta$, therefore
\begin{align*}((N+k).\eta|N.\eta)=&\frac{1}{2}(B_{(N+k).\eta}(o, N.z_k)+B_{N.\eta}(o, N.z_k))\\
=&\frac{1}{2}(B_{k.\eta}((-N).o, z_k)+B_{\eta}((-N).o, z_k))\\
\le&d((-N).o,z_k)\\
\le& d(o,N.o)+d(o.z_k)\\
\le& d(o,N.o)+C.
\end{align*}
We conclude that $$d_{\partial \H^n}((N+k).\eta,N.\eta)\ge e^{-d(o,N.o)-C}.$$
In order to conclude the proof we will need the following result.  

\begin{lemma}\label{lem:k/2} Using the notation above we have $$\lim_{t\to\infty}\frac{\#\log\{N\in \Z^k: d(o,N.o)\le t\}}{t}=\frac{k}{2}.$$
\end{lemma}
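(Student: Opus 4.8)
The plan is to reduce the orbit-counting problem to counting lattice points in a Euclidean ball by means of an explicit distance formula in the upper half-space model. Recall from the construction above that $\cP_0$ acts on the horosphere $x_n=1$ as a group of translations, so each $N\in\Z^k$ corresponds to a translation vector $v_N$ lying in the hyperplane $\R^{n-1}=\{x_n=0\}$; with $o=(0,\dots,0,1)$ we have $N.o=(v_N,1)$. First I would record the half-space distance formula, which for two points $p=(p',p_n)$ and $q=(q',q_n)$ reads $\cosh d(p,q)=1+\frac{|p'-q'|^2+(p_n-q_n)^2}{2p_nq_n}$, and specialize it to obtain $\cosh d(o,N.o)=1+\tfrac12|v_N|^2$.

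From this identity I would extract the asymptotic $d(o,N.o)=2\log|v_N|+O(1)$ as $|v_N|\to\infty$. Indeed, writing $e^{d}+e^{-d}=2+|v_N|^2$ one gets $1+|v_N|^2\le e^{d(o,N.o)}\le 2+|v_N|^2$, so $e^{d(o,N.o)}\asymp|v_N|^2$. Consequently the constraint $d(o,N.o)\le t$ is equivalent, up to a bounded additive change in $t$, to $|v_N|\le e^{t/2}$.

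Next I would note that, since $\cP_0\cong\Z^k$ acts faithfully and discretely by translations, the vectors $\{v_N:N\in\Z^k\}$ form a rank-$k$ lattice $L\subset\R^{n-1}$. The number of points of a rank-$k$ lattice inside a Euclidean ball of radius $R$ is $\Theta(R^k)$ for large $R$, by the usual comparison with the volume of the ball (count the fundamental domains contained in, respectively containing, the ball). Taking $R\asymp e^{t/2}$ therefore gives $\#\{N\in\Z^k:d(o,N.o)\le t\}=\Theta(e^{kt/2})$, whence $\log\#\{N\in\Z^k:d(o,N.o)\le t\}=\tfrac{k}{2}t+O(1)$. Dividing by $t$ and letting $t\to\infty$ yields the claimed limit $\tfrac{k}{2}$.

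The argument is essentially a computation once the distance formula is available, and I would not expect a genuine obstacle. The only point needing a little attention is the transition from the additive error in $d(o,N.o)=2\log|v_N|+O(1)$ to the count: this error merely rescales the radius $R=e^{t/2}$ by a bounded multiplicative factor, and since the lattice count $\Theta(R^k)$ is insensitive to such rescaling, the resulting $O(1)$ term in $\log\#$ is absorbed after division by $t$.
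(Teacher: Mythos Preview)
Your proposal is correct and follows essentially the same route as the paper: both reduce to the explicit half-space distance formula (you write it as $\cosh d(o,N.o)=1+\tfrac12|v_N|^2$, the paper equivalently as $d(o,N.o)=2\operatorname{arcsinh}(\tfrac12|v_N|)$), extract $d(o,N.o)=2\log|v_N|+O(1)$, and then count rank-$k$ lattice points in a Euclidean region of radius $\asymp e^{t/2}$ (you use balls, the paper uses cubes). The only cosmetic difference is that the paper writes the lattice explicitly via translation vectors $\alpha_1,\dots,\alpha_k$ and sandwiches between cubes, whereas you invoke the general $\Theta(R^k)$ estimate directly.
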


\begin{proof} Recall that $o=(0,...,0,1)$ is the origin of hyperbolic space.  Let $\alpha_i\in \R^{n-1}$ be the translation vector of $e_i$ while acting on the horosphere $x_n=1$. If follows from the definition of the hyperbolic metric on $\H^n$ that $$d(o,N.o)=2\text{arcsinh }\bigg(\frac{1}{2}|N_1\alpha_1+...+N_k\alpha_k|\bigg),$$
where $|.|$ stands for the euclidean metric on $\R^{n-1}$. It follows that the set $$\{d(o,N.o)-\log(|N_1\alpha_1+...+N_k\alpha_k|^2):N=(N_1,...,N_{k})\in \Z^{k}\},$$ is bounded. It is enough to prove that \begin{align}\label{lem:lim}\lim_{t\to\infty}\frac{\#\log\{N\in \Z^k: \log(|N_1\alpha_1+...+N_k\alpha_k|^2) \le t\}}{t}=\frac{k}{2}.\end{align}
There exists $D=D(\alpha_1,...,\alpha_n)>0$ such that the set $A_t:=\{N\in \R^n:|N_1\alpha_1+...+N_k\alpha_k|\le e^{t/2}\},$ contains a cube centered at the origin of radius $D^{-1} e^{\frac{t}{2}}$, and it is contained in a cube centered at the origin of radius $D e^{\frac{t}{2}}$. For $t$ large the number of integral points in $A_t$ will be of the order $e^{tk/2}$. This immediately implies the equality (\ref{lem:lim}).
\end{proof}

It follows from  Lemma \ref{lem:k/2}   that given $\epsilon>0$, there exists $E=E(\epsilon)$ such that for $t\ge E$ we have 
$$\#\{N\in \Z^k: d(o,N.o)\le t\}\ge e^{(\frac{k}{2}-\epsilon)t}.$$
Assume that  $r$ is sufficiently small in order to have $\log(e^{-C}/r)\ge E$, and define $A_r:=\{N\in \Z^k: d(o,N.o)\le \log(e^{-C}/r)\}$. We conclude that  
$$|A_r|=\#\{N\in \Z^k: d(o,N.o)\le \log(e^{-C}/r)\}\ge \frac{e^{-C(\frac{k}{2}-\epsilon)}}{r^{\frac{k}{2}-\epsilon}}.$$
Observe that inequality $d(o, N.o)\le \log(e^{-C}/r)$, is equivalent to $r\le e^{-d(o,N.o)-C}$. Therefore $N\in A_r$ implies that for all $k\in \Z^k$ we have
\begin{align*} r\le  e^{-d(o,N.o)-C}&\le d_{\partial \H^n}((N+k).\eta,N.\eta)=\sin\frac{1}{2}d_1((N+k).\eta,N.\eta)\\ &\le \frac{1}{2}d_1((N+k).\eta,N.\eta)
\end{align*}
where $d_1$ is the spherical metric on $\partial \H^n$ (coming from the natural embedding of the ball model into $\R^n$).  In other words, to cover $\cP_0.\eta$ with $d_1$-balls of radius $r$ we need at least $|A_r|$ balls (at least one ball for each $N.\eta$, where $N\in A_r$). We conclude that $$\liminf_{r\to 0}\frac{\log N(r)}{\log(\frac{1}{r})}\ge \liminf_{t\to 0}\frac{\log |A_r|}{\log(\frac{1}{r})} \ge \liminf_{r\to 0} \frac{\log\bigg(\frac{e^{-C(\frac{k}{2}-\epsilon)}}{r^{\frac{k}{2}-\epsilon}}\bigg)}{\log\big(\frac{1}{2r}\big)}= \frac{k}{2}-\epsilon.$$
Since $\epsilon$ was arbitrary we conclude that $\underline{\dim}_B \cP_0\eta\ge  \frac{k}{2}.$ 

\end{proof}

\begin{remark} Roblin  proved in \cite{rob} that if $\Gamma$ is non-elementary, then  
$$\lim_{t\to\infty}\frac{\#\log\{\gamma\in \Gamma: d(o,\gamma o)\le t\}}{t}=\delta_\Gamma.$$ 
 Since  $\cP_0$ is elementary we can not use Roblin's result. The proof of Roblin uses in an essential way that the group is non-elementary--it uses the Patterson-Sullivan conformal density at infinity associated to $\Gamma$. 
\end{remark}

\end{document}